\documentclass[11pt,twoside,reqno,nosumlimits]{amsart}
\usepackage{subeqnarray,url}

\usepackage{etoolbox}
\usepackage{amssymb}
\usepackage{newtxtext,newtxtt}   
\usepackage{microtype}

\usepackage[toc,page]{appendix}
\usepackage{fancyvrb}
\usepackage{listings}
\usepackage{algorithm}
\usepackage{algpseudocode}
\usepackage{bm}

\usepackage{mathtools}

\patchcmd{\section}{\scshape}{\bfseries}{}{}
\makeatletter
\renewcommand{\@secnumfont}{\bfseries}
\makeatother
\usepackage[toc,page]{appendix}
\usepackage{todonotes}

\usepackage[dvipsnames]{xcolor}
\usepackage{siunitx}
\patchcmd{\section}{\normalfont}{\normalfont\color{MidnightBlue}}{}{}
\patchcmd{\subsection}{\normalfont}{\normalfont\color{MidnightBlue}}{}{}
\makeatletter
\def\subsubsection{\@startsection{subsubsection}{3}%
\z@{.5\linespacing\@plus.7\linespacing}{-.5em}%
{\normalfont\bfseries}}
\makeatother

\usepackage{circuitikz}
\usepackage{fancyhdr}
\usepackage{amsmath,amsfonts,amsbsy,amsgen,amscd,mathrsfs,amsthm,mathtools,tensor}

\usepackage[a4paper,margin=2.0cm]{geometry}
\usepackage{tikz}
\usepackage{overpic}
\usepackage{amsopn}

\usepackage{array} 
\usepackage{booktabs}
\usepackage{makecell}
\usepackage{caption}
\usepackage{graphicx}
\usepackage{epstopdf}
\usepackage{pdfsync}
\usepackage[colorlinks]{hyperref}
\usepackage{comment}
\usepackage{mathabx}
\usepackage{upgreek}
\usepackage{subfig}
\usepackage{parskip}
\usepackage{cleveref}
\usepackage{yfonts}
\usepackage{epsfig,multirow}

\usepackage{accents}

\newtheorem{theorem}{Theorem}[section]

\newtheorem{assumption}[theorem]{Assumption}

\theoremstyle{definition}
\newtheorem{definition}[theorem]{Definition}
\theoremstyle{remark}
\newtheorem{remark}[theorem]{Remark}
\newtheorem{example}[theorem]{Example}

\usepackage{aurical}

\usetikzlibrary{shadows}
\usetikzlibrary{arrows}
\usetikzlibrary{shapes}

\usepackage[numbers]{natbib}

\usepackage[foot]{amsaddr}

\begin{document}
\title{KROM: Kernelized Reduced Order Modeling}
\author{Aras Bacho$^1$, Jonghyeon Lee$^1$, Houman Owhadi$^1$}
\date{\today}
\address{$^1$ Department of Computing and Mathematical Sciences, Caltech, CA, USA.}

\email{bacho@caltech.edu}
\email{jlee9@caltech.edu}
\email{owhadi@caltech.edu}

\begin{abstract}
We propose KROM, a kernel-based reduced-order framework for fast solution of nonlinear partial differential equations. KROM formulates PDE solution as a minimum-norm (Gaussian-process) recovery problem in an RKHS, and accelerates the resulting kernel solves by sparsifying the precision matrix via sparse Cholesky factorization. A central ingredient is an empirical kernel constructed from a snapshot library of PDE solutions (generated under varying forcings, initial data, boundary data, or parameters). This snapshot-driven kernel adapts to problem-specific structure—boundary behavior, oscillations, nonsmooth features, linear constraints, conservation and dissipation laws—thereby reducing the dependence on hand-tuned stationary kernels. The resulting method yields an implicit reduced model: after sparsification, only a localized subset of effective degrees of freedom is used online. We report numerical results for semilinear elliptic equations, discontinuous-coefficient Darcy flow, viscous Burgers, Allen--Cahn, and two-dimensional Navier--Stokes, showing that empirical kernels can match or outperform Mat\'ern baselines, especially in nonsmooth regimes. We also provide error bounds that separate discretization effects, snapshot-space approximation error, and sparse-Cholesky approximation error.
\end{abstract}

\maketitle

\section{Introduction}

 In this paper, we introduce KROM, a novel kernel- and Gaussian-process-based reduced order model for solving high-dimensional problems. Gaussian processes have been used for PDE solvers \cite{chen2021solving,
fang2024, mora2025, bach25} as well as for computational (hyper)graph completion \cite{houman_cgc}, scientific discovery \cite{bourdais2024}, time series analysis \cite{rubio2007, pandey2023, peters2009}, and uncertainty quantification \cite{bajgiran2021uncertainty,tavallali2024,jinghaili2005,yikianli2021,shuaiguo2021}. Their appeal lies in its interpretability, accuracy, flexibility, calibrated uncertainty estimates, and established theoretical guarantees. KROM exploits a sparse approximation of the Cholesky factor \cite{yifan2025, schaefer2021, schaefer2024, huan2025} of the inverse kernel matrix to significantly reduce the time required to solve the PDE and combines it with a particular choice of kernel that is constructed by summing snapshots of solutions of a PDE under different forcing terms, initial conditions or boundary conditions. We also highlight the primary challenges encountered when applying ROMs to these PDEs.


ROMs have historically been motivated by problems in aerodynamics, where potentially millions of grid points and time steps are required to solve the Navier-Stokes equations under turbulent flows accurately \cite{noack2005, rowley2017, brunton2020, barnett2022quadratic_manifold, barnett2023nn_augmented_rom, grimberg2020_stability_prom, peherstorfer2016opinf, swischuk2019nonintrusive,mohan2018lstrom,wang2022fno_les_iso,luo2024fno_rt_les}. In structural mechanics \cite{structural2012,guo2018gaussian, touze2021review_nonlin_mor, Bac2023, park2023gpr_ice_struct}, mathematical models must accurately capture the effects of stress and vibrations over long time frames; ROMs enable engineers to quickly simulate the time evolution of these physical properties and evaluate the structural stability of the object with minimal loss of accuracy. ROMs also play an important role in climate, weather and ocean models, which take into account many atmospheric, oceanic, and land phenomena at once \cite{maulik2022,GWSM4C, oceans2009, oceans2006}; a ROM can consider only the most important patterns to yield predictions of future weather and climate at many locations simultaneously. Further applications of ROMs have arisen in other fields as diverse as biomedical engineering \cite{chellappa2024,lassila2013,buoso2019, ballarin2016, cicci2023} and electromagnetics \cite{CAD2024, li2021nonintrusive_em_spline, yu2024nimor_em}.


\subsection{Contributions and structure of the paper}
The main contributions of our work are: 

\begin{itemize}
    \item \textbf{Kernelized implicit ROM for nonlinear PDEs.}
    We formulate PDE solution as a Gaussian-process optimal recovery problem in an RKHS and obtain an \emph{implicit} reduced-order model: the solution is represented in the span of a small, adaptively selected subset of degrees of freedom induced by sparsification of the precision matrix.

    \item \textbf{Empirical (snapshot) kernel that encodes solution structure.}
    We introduce an empirical kernel built from a library of PDE solution snapshots so that the kernel captures problem-specific features (boundary behavior, oscillations, nonsmoothness, linear constraints, and conservation/dissipation structure), reducing or eliminating hyperparameter tuning and mitigating the smoothness bias of generic kernels (e.g.\ Matérn).

    \item \textbf{Scalable sparse-precision implementation.}
    Using sparse Cholesky factorization of the inverse kernel matrix, we obtain near-linear (up to polylog factors) memory/time complexity in the number of collocation constraints, enabling fast online solves after offline snapshot/kernel construction.

    \item \textbf{Non-intrusive reduced modeling.}
    KROM does not require Galerkin projection or intrusive reduced operators; nonlinearities are handled through the constrained GP recovery (solved by Gauss--Newton), while sparsification provides the computational reduction.

    \item \textbf{A quantitative error budget.}
    We provide convergence/error bounds that separate (i) collocation/discretization error (fill distance), (ii) snapshot/modeling error (finite empirical kernel space / manifold approximation), and (iii) sparse-Cholesky approximation error (controlled by the sparsity radius $\rho$), yielding guidance for balancing $M$, $N$, and $\rho$.

    \item \textbf{Performance on nonsmooth and multiscale regimes.}
    Through extensive experiments (semilinear elliptic, discontinuous-coefficient Darcy, viscous Burgers, Allen--Cahn, and 2D Navier--Stokes), we demonstrate that KROM matches or outperforms Matérn-based GP solvers and classical methods, especially in problems with discontinuities or sharp gradients.
\end{itemize}

\subsection{Outline}
Section~2 presents the kernel-based optimal-recovery formulation and motivates empirical (snapshot) kernels. Section~3 reports numerical experiments across several nonlinear PDE benchmarks. Section~4 provides error bounds and an explicit decomposition into discretization, snapshot-space approximation, and sparse-Cholesky effects. Section~5 concludes with limitations and directions for future work.


\subsection{Literature Review}

Several well-known classes of ROMs, both classical projection methods and more recent machine learning models, have been developed for solving nonlinear PDEs. We detail the most common techniques below; a more comprehensive review can be found in \cite{hinze2021model,quarteroni2016reduced, kuanlu2021}.

Proper orthogonal decomposition (POD) \cite{lumley1967structure,Sirovich1987} is widely considered the first ROM. Originally developed for analyzing coherent structures of linear PDEs but later extended to turbulent flows \cite{aubry1988, POD1993}, the basic idea behind POD is to project the solution into an orthonormal basis of functions. Formally, we would assume that the solution $u(\mathbf{x},t)$ of a space-time PDE is approximately of the form:

\begin{equation}
    u(\mathbf{x},t) \approx \sum^N_{i=1} a_i (t) \phi_i (\mathbf{x}),
\end{equation}

where the $\phi_i$ functions form a basis in the solution space of the PDE. The key step is to collate the solution snapshots into a matrix, perform a singular value decomposition (SVD), and choose the first $r \ll N$ columns of the left SVD matrix $\Phi$ to be the orthogonal basis functions $\phi_i, \; i= 1,\dots, r$. The purpose behind this low-rank SVD is to select the modes which capture most of the energy of the solution snapshots in an $L^2$ sense. Obtaining these orthonormal functions allows us to reduce the nonlinear PDE into a system of low-dimensional ODEs, which are faster to solve than the original PDE. Variants of POD remain common in applications such as aerodynamics \cite{simpson2024vprom,amsallem2008interpolation,li2022efficient,SOLANFUSTERO2022111672} and oceanography \cite{Kitsios2024}.

However, a naive projection to the lower-dimensional space is computationally expensive for nonlinear ODEs due to the need to recompute the nonlinear differential operator at every time step; this has inspired the development of refined algorithms that are more efficient than classical POD. One such method, gappy POD \cite{EversonSirovich1995,dechanubeksa2020application}, was originally developed as a way to reconstruct PDE solution data from partial observations. In gappy POD, a binary measurement matrix $P \in \mathbb{R}^{m \times N}$, where  $m \ll N$ is the number of randomly selected points at which we observe the PDE solution, is applied to the vector of snapshots $u$. We then seek a vector $a$ which minimizes the least-squares problem $||Pu -P \Phi a||^2$. This procedure is particularly efficient in real-life applications in which measurement sensors may not be present at all points in space or time. Later methods such as least-squares Petrov-Galerkin (LSPG) \cite{carlsberg2011} and Gauss-Newton for approximated tensors (GNAT) \cite{carlsberg2011} use a slight variation of gappy POD for nonlinear term hyperreduction in which the nonlinear term of the PDE instead of the solution itself was approximated from sparse measurements using the same procedure as gappy POD, before being plugged back into the system of ODEs derived with POD.

Missing point estimation \cite{astrid2004, astrid2008} improves gappy POD by using a greedy algorithm to take representative snapshots of the data, either from the solution itself, in which case it is known as principled sparse sampling \cite{Willcox2006, Yildirim2009Efficient}, or the nonlinear term of the PDE, that captures the dynamics of the system instead of choosing sampling points randomly. The points are typically selected by taking a SVD ($p \ll N$) of the snapshot matrix $\mathbf{F}$ and performing a pivoted-QR decomposition of the left SVD matrix $\Phi_F$ to identify the $p \ll N$ most informative rows and selecting the corresponding collocation points.
We solve the same least-squares reconstruction problem as in gappy POD to recover the missing data.

Among classical ROMs, arguably the most well-studied improvement on POD is the discrete empirical interpolation method (DEIM) \cite{chaturantabut2010}. DEIM works by performing a low-rank SVD on the snapshot of nonlinear terms, much like GNAT and LSPG, but assumes access to snapshots at every point. The aim of DEIM is to interpolate the nonlinearity $\mathcal{N}(u)$ using only $p \ll N$ points. The point selection process works as follows: we take the first column $\xi_1$ of $\mathbf{\Xi}$, the left SVD matrix of $\mathbf{F}$, and select the position of its maximum value as the index of the first collocation point. Next, we compute a residual vector formed when the $j$th basis vector is projected onto the previously chosen basis vectors, and choose the the $j$th collocation point by selecting the index where the residual vector is largest until we have reached $j=p$. The main achievement of DEIM is that it created a ROM in which the nonlinear computations are independent of the size of the original problem. Q-DEIM \cite{DrmacGugercin2016} simplifies the point selection process by performing a pivoted-QR decomposition on the nonlinear snapshot matrix and selecting the points corresponding to the position of the $1$ entry in the first $p$ columns of the pivot matrix. Further extensions of DEIM such as TQ-DEIM \cite{TQDEIM} and S-DEIM \cite{farazmand2024} have been developed to deal with problems involving tensor-valued and sparse data respectively.




In recent years, Dynamic Mode Decomposition (DMD) \cite{dmdintro}, a data-driven method that decomposes the system dynamics into modes associated with specific temporal frequencies, has been successfully applied to linear systems and extended to handle nonlinear PDEs by combining DMD with machine learning approaches. The key idea behind Nonlinear DMD (nDMD) \cite{HUHN2023111852,nayak,kalur2023data,libero2024extended} is to extract nonlinear modes by using a combination of data-driven methods and neural networks or other machine learning techniques. Nonlinear DMD aims to approximate the underlying nonlinear dynamics of the system, extracting modes that evolve with both time and parameter changes.

In addition, a large number of machine learning ROMs have proliferated in recent years. These typically rely on neural networks \cite{LI2025113705,chen2023crom,wen2023reduced} to approximate the solution manifold. Physics-informed neural networks (PINNs) \cite{CHEN2021110666, hijazi2023pod,CHEN2024117198}, for example, embed the governing equations directly into the loss function, allowing the model to learn the solution in the reduced space while satisfying the PDE constraints. Additionally, autoencoders \cite{romor2023non, conti2023reduced} and variational autoencoders \cite{simpson2024vprom} have been used to compress high-dimensional data and discover low-dimensional representations of the solution space. One of the main challenges is the need for large datasets to train the models; this can be computationally expensive, and ensuring that they generalize well to new problems is still an open question.

\section{Solving PDEs with kernels and sparse Cholesky}
We recapitulate the key ideas behind solving PDEs with kernels via minimum-norm recovery in an RKHS, and we motivate \emph{empirical (snapshot) kernels} as a data-driven alternative to generic stationary kernels. The guiding principle is that the kernel is not merely a numerical device: it encodes the notion of similarity and complexity that the solver uses. Empirical kernels learn this notion of similarity from representative solutions, thereby adapting to anisotropy, nonstationarity, and low-dimensional structure that frequently arise in PDE solution families. Finally, we briefly explain why sparse Cholesky factorizations are central for scalability.

\subsection{Kernel minimum-norm recovery for PDE constraints}

Let $\Omega\subset \mathbb{R}^n$ be a Lipschitz domain and $T>0$. Consider the general initial--boundary value problem (IBVP)
\begin{align}\label{eq:ibvp_main}
\begin{cases}
\mathcal{P}(t,\mathbf{x},u)=f, & \mathbf{x}\in\Omega,\ t\in(0,T),\\
\mathcal{B}(t,\mathbf{x},u)=g, & \mathbf{x}\in\partial\Omega,\ t\in(0,T),\\
\mathcal{I}(\mathbf{x},u)=h, & \mathbf{x}\in\Omega,
\end{cases}
\end{align}
where $\mathcal{P}$ denotes a (possibly nonlinear) differential operator, $\mathcal{B}$ a boundary operator, and $\mathcal{I}$ an initial-condition operator.

The classical Gaussian process (GP) methodology to solve \eqref{eq:ibvp_main} for given $f,g,$ and $h$ \cite{chen2021solving} aims to minimize the reproducing kernel Hilbert space (RKHS) norm induced by a kernel $K$, subject to enforcing \eqref{eq:ibvp_main} at a finite set of collocation points. More formally, let $\tilde{\Omega}:=[0,T]\times\Omega$ and write $z=(t,\mathbf{x})\in\tilde{\Omega}$. Choose a positive semidefinite kernel $K$ on $\tilde{\Omega}\times\tilde{\Omega}$ with associated RKHS $\mathcal{H}_K$ and norm $\|\cdot\|_K$. Given collocation points
\[
(z_i)_{i=1}^{M_\Omega}\subset (0,T)\times\Omega,\qquad
(\tilde z_j)_{j=1}^{M_{\partial \Omega}}\subset (0,T)\times\partial\Omega,\qquad
(\mathbf{x}_k)_{k=1}^{L}\subset \Omega,
\]
the kernel method seeks the minimum-norm function that satisfies the PDE constraints at these locations:
\begin{align}\label{eq:min_norm_main}
\min_{v\in \mathcal{H}_K}\ \|v\|_K^2
\quad \text{s.t.}\quad
\begin{cases}
\mathcal{P}(z_i,v)=f(z_i), & i=1,\dots,M_\Omega,\\
\mathcal{B}(\tilde z_j,v)=g(\tilde z_j), & j=1,\dots,M_{\partial \Omega},\\
\mathcal{I}(\mathbf{x}_k,v)=h(\mathbf{x}_k), & k=1,\dots,L.
\end{cases}
\end{align}
The corresponding finite-dimensional problem is, by the representer Theorem~\ref{th:representer.thm}, given by
\begin{align} \label{eq.min_fin}
    \min_{\overset{\mathbf{z} \in \mathbb{R}^{D M_\Omega + M_{\partial\Omega}+L}}{\textrm{s.t.} 
    F(\mathbf{z}) = \mathbf{y},
    }} \mathbf{z}^T K(\phi,\phi)^{-1} \mathbf{z},
\end{align} 
where $\phi$ is the vector of evaluation and differential functionals applied to $u$ at the collocation points; $F$ encodes the operators $\mathcal{P}, \mathcal{B}, \mathcal{I}$ evaluated at those points; $D$ is the number of distinct linear differential functionals used in the interior constraints; and $\mathbf{y}$ stacks the corresponding right-hand sides. When $\mathcal{P}$ is nonlinear, the constraints are nonlinear in $v$ and \eqref{eq.min_fin} is typically solved by Gauss--Newton or related sequential linearization schemes; for linear operators it reduces to a linear system.

\subsubsection{Representer form.} By the representer theorem, see Theorem \eqref{th:representer.thm}, the minimizer (when it exists) lies in the span of kernel sections associated with the collocation sites. In its simplest form, the solution can be written as
\begin{align}\label{eq:representer_main}
u(z)=\sum_{m=1}^{N_{\text{data}}} c_m\, K(z,z_m),
\qquad N_{\text{data}}:=DM_\Omega+M_{\partial\Omega}+L,
\end{align}
with coefficients $c_m$ chosen so that the constraints in \eqref{eq:min_norm_main} are satisfied. (For interior PDE constraints involving derivatives, the corresponding representer involves kernel sections acted on by the appropriate differential/evaluation functionals).

\subsubsection{Why the kernel choice matters}

The kernel $K$ fixes the RKHS $\mathcal{H}_K$ and therefore the notion of ``simplicity'' promoted by the minimum-norm principle. Standard stationary kernels such as Mat\'ern or Gaussian kernel primarily encode generic smoothness and isotropy. However, for many PDEs the family of admissible solutions is not well described by global isotropic regularity: it may exhibit transport-aligned features, localized layers, sharp transitions, or coherent structures with a few degrees of freedom. In such cases, a generic smoothness prior can be statistically inefficient as it requires many constraints, and numerically delicate.

This motivates \emph{problem-adapted kernels} that reflect the geometry of the solution set, rather than imposing a generic notion of smoothness.

\subsection{Empirical kernels: learning similarity from data}\label{subsec:empirical_kernel_main}

Assume we have access to representative solutions (snapshots) $\{u_i\}_{i=1}^N$, obtained by varying forcing terms, boundary conditions, initial conditions, and/or parameters while keeping the domain and the operators fixed. We define the empirical kernel
\begin{align}\label{eq:empirical_kernel_main}
K_N(z,z'):=\frac{1}{N}\sum_{i=1}^N u_i(z)\,u_i(z'),
\qquad z,z'\in\tilde{\Omega}.
\end{align}
This kernel is automatically positive semidefinite (it is an average of rank-one kernels) and therefore induces a valid RKHS.

The definition \eqref{eq:empirical_kernel_main} can be motivated from several complementary viewpoints; together they explain why $K_N$ is a natural substitute for hand-chosen stationary kernels.

\subsubsection*{Learning the right notion of similarity (geometry of the solution set)}
A kernel is a similarity measure on $\tilde\Omega$, and the induced RKHS norm determines which functions are considered ``simple.'' The empirical kernel declares two points $z$ and $z'$ to be similar if their values co-vary across realistic solutions. This automatically produces a \emph{nonstationary}, \emph{anisotropic} kernel aligned with the observed solution manifold, capturing transport directions, localized activity, and coherent patterns without prescribing them a priori.

\subsubsection*{Kernel as feature map (linear kernel in snapshot coordinates)}
Define the feature map $\Phi:\tilde{\Omega}\to\mathbb{R}^N$ by
\[
\Phi(z):=\frac{1}{\sqrt{N}}\big(u_1(z),\dots,u_N(z)\big).
\]
Then
\[
K_N(z,z')=\langle \Phi(z),\Phi(z')\rangle_{\mathbb{R}^N}.
\]
Thus $K_N$ is a \emph{linear kernel} in a data-driven feature space whose coordinates are physically meaningful (snapshot values). Rather than hand-designing basis functions, we let the PDE generate features that reflect the relevant dynamics.

\subsubsection*{Reduced-order modeling in kernel form (ROM/POD bridge)}
Classical reduced-order modeling approximates new solutions in a low-dimensional space identified from training solutions (POD or reduced bases). The empirical kernel is the kernelized packaging of the same idea: the hypothesis space induced by $K_N$ is the snapshot span, while the kernel machinery provides a principled way to fit coefficients by enforcing PDE constraints, without explicitly assembling projection-based reduced operators.

\subsubsection*{Operator viewpoint: approximating the solution map without inverting it}
Write the PDE solution map abstractly as $u=\mathcal{S}(f)$ (linear case: $\mathcal{S}=L^{-1}$). If snapshots arise as $u_i=\mathcal{S}(f_i)$ under a family of excitations $\{f_i\}$, then $K_N$ estimates the output correlation structure of $\mathcal{S}$ under those excitations:
\[
K_N(z,z')\approx \mathbb{E}\big[u(z)u(z')\big]
= \mathbb{E}\big[\mathcal{S}(f)(z)\,\mathcal{S}(f)(z')\big].
\]
In the linear setting, for sufficiently rich (``white-ish'') forcing ensembles, this covariance behaves like a regularized inverse-type object (up to the forcing covariance). In this way, $K_N$ acts as a data-driven surrogate for the smoothing/anisotropy of the inverse operator, without requiring eigenfunctions, Green's functions, or explicit inversion.

\subsubsection*{Probabilistic viewpoint: empirical covariance and optimal linear prediction}
If the snapshots are interpreted as samples from the distribution of solutions relevant to the task (random forcing/parameters), then $K_N$ is precisely the empirical covariance of the random field $u(z)$. Under the corresponding GP prior, the minimum-norm interpolant (equivalently, the noiseless GP posterior mean) is the optimal linear predictor. This provides a clear statistical rationale: we tailor the prior to the task distribution instead of imposing generic smoothness.

\subsubsection*{Practical payoffs: efficiency, constraints, compression}
Empirical kernels offer several practical advantages:
\begin{itemize}
\item \emph{Sample efficiency:} aligning the kernel with the solution manifold can reduce the number of collocation points needed to resolve layers/transport compared to stationary smoothness kernels.
\item \emph{Built-in structure:} if all snapshots satisfy a homogeneous linear constraint (e.g.\ homogeneous boundary conditions), then any linear combination does as well, so such constraints can be satisfied by construction.
\item \emph{Compression:} $K_N$ is finite rank ($\le N$) and can be compressed via POD/PCA by retaining only dominant modes, yielding a controllable accuracy--cost trade-off.
\end{itemize}

\subsubsection{RKHS induced by the empirical kernel and representer form}
The empirical kernel induces an RKHS that coincides with the snapshot span (with a data-dependent inner product). Concretely, the representer theorem with $K_N$ yields
\begin{align}\label{eq:representer_empirical_main}
u(\cdot)=\sum_{m=1}^{N_{\text{data}}}\alpha_m\,K_N(\cdot,z_m)
=\frac{1}{N}\sum_{i=1}^N \beta_i\,u_i(\cdot),
\end{align}
for suitable coefficients $\alpha_m$ (and induced snapshot coefficients $\beta_i$ determined by the constraints). Hence the recovered solution is a linear combination of snapshots, with coefficients chosen to enforce \eqref{eq:ibvp_main} at the collocation sites. This makes explicit the reduced-basis inductive bias of empirical kernels.

Hence $K_N$ is a positive finite-rank operator whose range equals $\mathrm{span}\{u_i\}$. In particular, when the snapshots are $L^2$-orthonormal, $K_N$ is the orthogonal projector onto $\mathrm{span}\{u_i\}$. Consequently, the associated RKHS is exactly the snapshot space (with a data-dependent inner product):
\[
\mathcal{H}_{K_N}=\mathrm{span}\{u_1,\dots,u_N\}.
\]
Thus, choosing $K_N$ enforces a reduced-basis inductive bias: the recovered solution is sought in the span of previously observed solutions. In order to have the optimal  approximation of the solution manifold by a linear subspace spanned by the snapshots, one can obtain the snapshots by suitable algorithms such as the greedy algorithm, see Section \ref{se.greedy}. As discussed in Section \ref{se.greedy}, this can guarantee a (sub)-exponential decay of the Kolmogorov $N$-width in dependence on the number of snapshot solutions. 

\subsubsection{Homogeneous linear constraints satisfied by construction}
Suppose $\mathcal{B}$ is linear and all snapshots satisfy a homogeneous constraint such as
\begin{align}\label{eq:hom_constraint_main}
\mathcal{B}(t,\mathbf{x},u)=0,\qquad (t,\mathbf{x})\in (0,T)\times\partial\Omega.
\end{align}
Then any linear combination $\sum_i \beta_i u_i$ satisfies \eqref{eq:hom_constraint_main}. Consequently, the recovered solution \eqref{eq:representer_empirical_main} satisfies that constraint identically, and the corresponding boundary conditions need not be enforced explicitly in the collocation system. For inhomogeneous constraints ($\mathcal{B}u=g\neq 0$) one may use standard remedies such as a lifting $u=u_0+v$ with $\mathcal{B}u_0=g$ and $\mathcal{B}v=0$, or augment the snapshot set.

\subsubsection{Low-rank compression via POD/PCA}
Since $K_N$ is finite rank, it can be compressed by retaining only dominant POD/PCA modes. If $\{\phi_\ell\}_{\ell=1}^r$ are leading modes with energies $\sigma_1^2\ge\cdots\ge\sigma_r^2$ (with $r\ll N$), a truncated kernel
\[
K_r(z,z'):=\sum_{\ell=1}^r \sigma_\ell^2\,\phi_\ell(z)\phi_\ell(z')
\]
captures the dominant variability while reducing computational cost; the truncation error is controlled by the neglected energy $\sum_{\ell>r}\sigma_\ell^2$.

\subsubsection{Low-rank compression via POD/PCA and connection to POD ROM}
Since $K_N$ is finite rank, it can be compressed by retaining only dominant POD/PCA modes.
Let $\{\phi_\ell\}_{\ell\ge 1}$ be the POD modes of the snapshot ensemble with energies
$\sigma_1^2\ge \sigma_2^2\ge \cdots$ (defined with respect to the inner product used for POD,
e.g.\ the $L^2$ inner product or a finite-element mass inner product).
Then the empirical covariance kernel admits the spectral expansion
\[
K_N(z,z')=\sum_{\ell=1}^{\operatorname{rank}(K_N)} \sigma_\ell^2\,\phi_\ell(z)\phi_\ell(z').
\]
For $r\ll N$, define the truncated (rank-$r$) kernel
\[
K_r(z,z'):=\sum_{\ell=1}^r \sigma_\ell^2\,\phi_\ell(z)\phi_\ell(z'),
\]
which captures the dominant variability while reducing computational cost; the truncation error
is controlled (in the POD sense) by the neglected energy $\sum_{\ell>r}\sigma_\ell^2$.

We note that the associated RKHS is exactly the POD trial space
\[
\mathcal{H}_{K_r}=\mathrm{span}\{\phi_1,\dots,\phi_r\}=:V_r,
\]
and any $v\in \mathcal{H}_{K_r}$ admits the POD expansion $v=\sum_{\ell=1}^r a_\ell \phi_\ell$. Thus, choosing $K_r$ makes KROM a reduced-order method with POD trial space $V_r$.

We further note, that KROM coincides with the standard POD--Galerkin ROM if we choose the constraints in the optimal recovery problem \eqref{eq:min_norm_main} to be the weak formulation of the PDE of interest, i.e.,  
\[
F_j(v):=\langle \mathcal{R}(u_r), \phi_j\rangle = 0,\qquad j=1,\dots,r.
\]

\subsection{Computational aspect: why sparse Cholesky is relevant}

The kernel systems arising from \eqref{eq:min_norm_main} are typically dense if formed naively. Scalability therefore hinges on exploiting structure. Empirical kernels are particularly amenable to efficient linear algebra because they are low rank (or well-approximated by low rank after POD/PCA). In addition, many PDE constraint assemblies and kernel approximations admit \emph{effective sparsity} through localization/screening ideas (dominant near-field interactions) or sparse-plus-low-rank decompositions. Once a sparse (or sparsified) positive definite system is obtained, sparse Cholesky factorizations provide an efficient and numerically stable way to solve the linear systems that arise in the linear case and within each Gauss--Newton step for nonlinear problems.

In summary, kernel minimum-norm recovery provides a flexible framework for enforcing PDE constraints, while empirical kernels replace generic smoothness priors by a similarity notion learned from representative solutions, yielding reduced-basis behavior, improved adaptation to complex solution manifolds, and computational opportunities through low-rank structure and sparse factorizations.

\section{Numerical Examples}
In this section, we perform extensive numerical experiments with a variety of examples showing the advantage of the empirical kernel over the Matérn-2.5 kernel which is commonly used for solving PDEs with kernels \cite{yifan2025}. We offer a summary of the setup of each experiment in Table \ref{table:krom_pde_params} in Section 3.6.
\subsection{Semilinear Elliptic Equation}  
\label{sec:semilinear_elliptic}

In this example, we consider the following two-dimensional semilinear elliptic equation in supplemented with inhomogeneous Dirichlet boundary conditions:
\begin{align}
    \begin{cases}
        -\Delta u(\mathbf{x}) + \tau(u(\mathbf{x})) = f(\mathbf{x}), \quad \mathbf{x} \in \Omega,\\
        u(\mathbf{x}) = g(\mathbf{x}), \quad \mathbf{x} \in \partial \Omega,
    \end{cases}
\label{nonlinear_elliptic}
\end{align}
where $f: \Omega \to \mathbb{R}$ denotes the forcing term, $\tau: \mathbb{R} \to \mathbb{R}$ is a nonlinear function (e.g., $\tau(u) = u^3$ or $\tau(u) = \sin(u)$), $g: \partial \Omega \to \mathbb{R}$ is the boundary function.

Nonlinearity in $\tau(u)$ requires iterative solvers such as Newton's method or Picard iteration. 
We randomly generate functions $f_i$ from a centered Gaussian distribution with a Gaussian kernel with lengthscale $\sigma=0.15$, and solve the corresponding PDE with a finite element method combined with Newton's method to construct our empirical kernel.

We use a manufactured-solution setup: we prescribe
\[
u^\star(x_1,x_2)=0.5 \sin(\pi x_1)\sin(\pi x_2)+\sin(2\pi x_1)\sin(2\pi x_2),
\]
set $g=u^\star|_{\partial\Omega}$ (here $g\equiv 0$), and define
$f := -\Delta u^\star + (u^\star)^3$ so that $u^\star$ satisfies \eqref{nonlinear_elliptic} with $\tau(u)=u^3$. Then, for $f$ and $g$, we aim to approximate the true solution with our method using the empirical kernel  on the square grid $\Omega = (0,1)^2$ under different choices of sparsity parameter $\rho$, number of solutions $N$ used to construct the kernel and the number of collocation points $M$.

We use three Gauss-Newton iterations to solve the PDE with both the empirical kernel and the Matérn-2.5 kernel 

\begin{equation}
    K_\theta (x,y) =  \left(1+ \sqrt{5} \frac{||x-y||}{\theta} + \frac{5}{3} \frac{||x-y||^2}{\theta^2}\right) \exp \left(\frac{-\sqrt{5}||x-y||}{\theta}\right)
\end{equation}

with lengthscale parameter $\theta =0.3$, where we have applied the sparse Cholesky algorithm to the inverse of the empirical and Matérn kernel matrices. The results in Figures \ref{fig:elliptic_4} show an initial rapid decay in the relative error between the true solution and the kernel solution as $N$, the number of solutions used to construct the empirical kernel, increases while we fix $M=32^2$ and $\rho = 4$, before plateauing to values between $0.01$ and $0.02$ when $N=60$, as the error cannot decrease any further without increasing the number of collocation points. Similarly, the relative error decreases as $\rho$ increases while $M=32^2$ and $N = 200$ stay fixed, as displayed in Figure \ref{fig:elliptic_5}; this is expected as a larger sparsity parameter means that the sparse Cholesky factorization is a better approximator of the true precision matrix. We also observe in Figure \ref{fig:elliptic_6} a near-linear decrease in the log of the relative error of the numerical solution generated by both kernels as $\log(M)$ increases while the number of solutions $N=5000$ and sparsity parameter $\rho=4$ is kept fixed. In this example, both the Matérn-2.5 and empirical kernels accurately recover the true solution because it is a smooth function.

\begin{figure}[tbh]
\centering
\subfloat{\label{fig:elliptic_1}
\includegraphics[width=.31\textwidth]{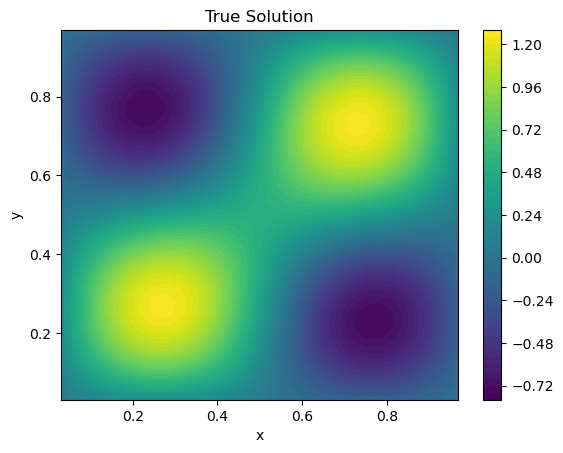}
}
\hfill
\subfloat{\label{fig:elliptic_2}
\includegraphics[width=.31\textwidth]{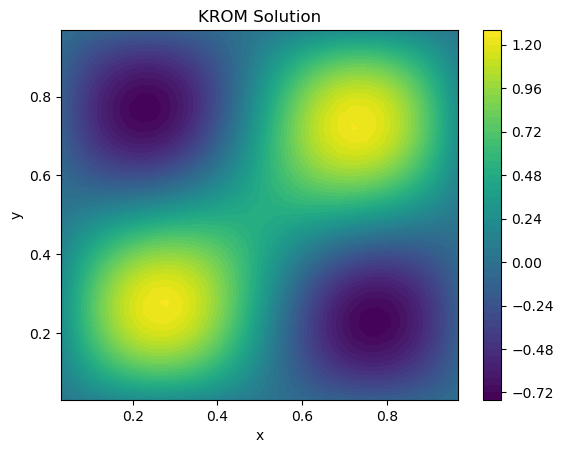}
}
\hfill
\subfloat{\label{fig:elliptic_3}
\includegraphics[width=.31\textwidth]{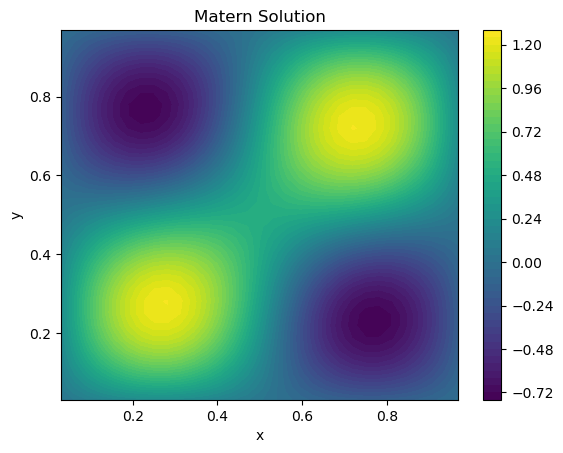}
}
\hfill
\caption{True solution (left) compared with the solution given by the empirical kernel (centre) and Matérn-2.5 kernel (right) for $M=64^2$ collocation points and sparsity parameter $\rho = 4$, as well as $N=5000$ solutions for constructing the empirical kernel.}
\label{nonlin}
\end{figure}

\begin{figure}[tbh]
\centering
\subfloat{\label{fig:elliptic_4}
\includegraphics[width=.31\textwidth]{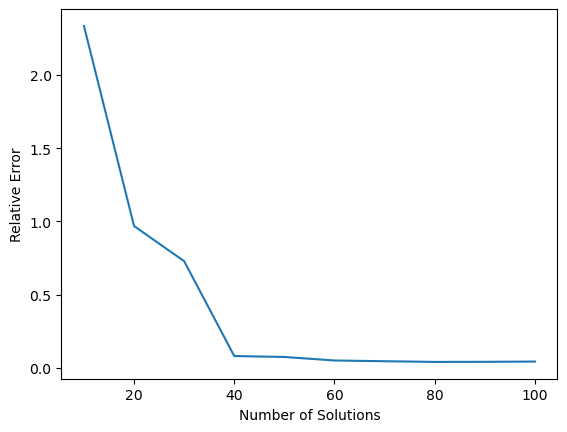}
}
\hfill
\subfloat{\label{fig:elliptic_5}
\includegraphics[width=.31\textwidth]{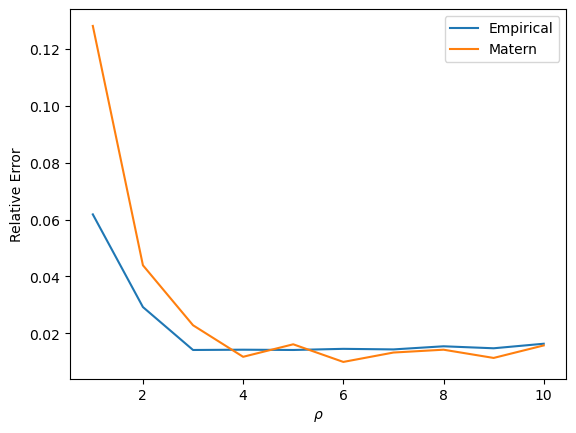}
}
\hfill
\subfloat{\label{fig:elliptic_6}
\includegraphics[width=.31\textwidth]{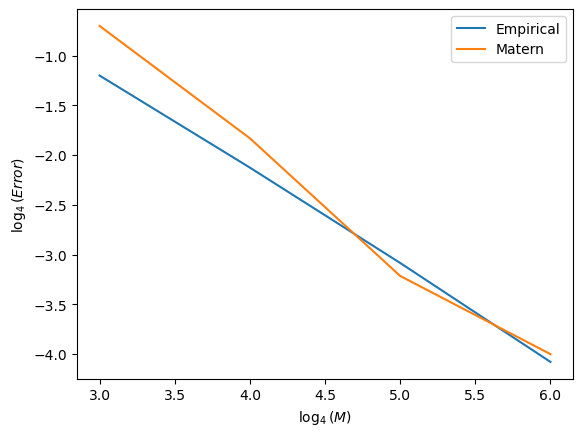}
}
\hfill
\caption{Decrease in relative error of solution obtained by the empirical kernel as the number of solutions $N$ (left) increases while number of collocation points $M=32^2$ and sparsity parameter $\rho = 5$ are fixed; as $\rho$ (centre) increases while $N=200, M=32^2$ are fixed; and as $M$ (right) increases for $\rho=4, N=5000$ are fixed.}
\label{nonlin_error}
\end{figure}

\begin{figure}[tbh]
\centering
\subfloat{\label{fig:elliptic_7}
\includegraphics[width=.45\textwidth]{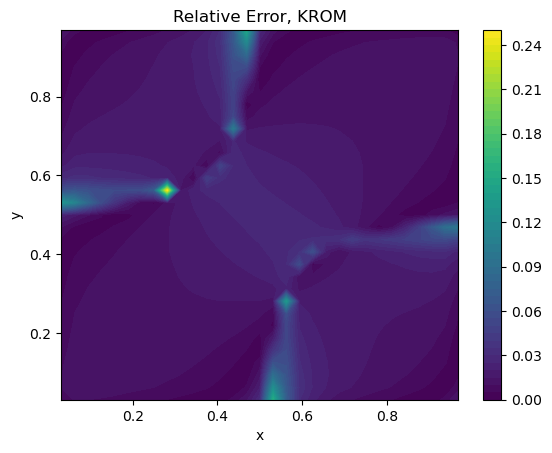}
}
\hfill
\subfloat{\label{fig:elliptic_8}
\includegraphics[width=.45\textwidth]{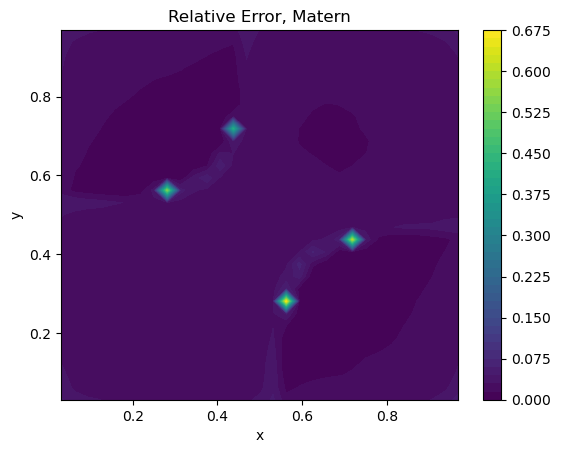}
}
\hfill
\caption{Pointwise relative error of the solution to the nonlinear elliptic PDE obtained by the empirical kernel for $M=32^2, \rho = 5$, $N=200$ for the empirical (left) and Matérn-2.5 (right) kernels; notice the relative error is higher where the true solution is near zero.}
\end{figure}

\subsection{Darcy Flow} 
\label{sec:darcy} 

Darcy's law governs the flow of a fluid through a porous medium and is described by the following:
\begin{align}
    \begin{cases}
        -\nabla \cdot (k(\mathbf{x}) \nabla u(\mathbf{x})) + \tau(u(\mathbf{x})) = f(\mathbf{x}), \quad \mathbf{x} \in \Omega,\\
        u(\mathbf{x}) = g(\mathbf{x}), \quad \mathbf{x} \in \partial \Omega,
    \end{cases}
\label{eq:darcy}
\end{align}
where $k(\mathbf{x}) > 0$ is the permeability tensor (possibly spatially varying), $u(\mathbf{x})$ is the pressure, $\tau (u)$ is a nonlinear function, $f(\mathbf{x})$ is the source term, and $g(\mathbf{x})$ represents Dirichlet boundary conditions. 

The Darcy Flow PDE is a suitable benchmark for ROMs because spatial variability in $k(\mathbf{x})$ can be extreme, making reduced bases problem-specific and non-trivial to generalize. If $k(\mathbf{x})$ varies by orders of magnitude within the domain, it is difficult to construct a low-dimensional representation capturing fine-scale flow features.

For nonlinear Darcy flow, $k$ may depend on $p$ (e.g., $k(p) = k_0(1 + \alpha p)$), leading to a nonlinear PDE. In our example, we consider $\tau (u) = u^3$ and the following discontinuous coefficient function:

\begin{equation}
   k(\mathbf{x}) =  \frac{101}{2}-\frac{99}{2} (-1)^{\lfloor 8x_1 \rfloor+\lfloor 8x_2 \rfloor}
\end{equation}

As in the previous example (semilinear elliptic equation), we generate our solutions $u_i$ by solving the PDE \ref{eq:darcy} with Dirichlet boundary condition $g=0$ over the domain $\Omega = (0,1)^2$ and $M=32^2$ collocation points, with a finite element method coupled with Newton's method; we have generated each $f_i$ using a centered Gaussian process using the Gaussian kernel with lengthscale $\sigma=0.2$ as our covariance function. We then test our KROM with a different right-hand side, namely $f=1$. In Figures \ref{darcy} and \ref{darcy_error}, the benefits of using the empirical kernel become transparent: since $k(\mathbf{x})$ is not continuous, the solution $u$ is not smooth and thus will not lie in the RKHS induced by the Matérn-2.5 kernel, hence the relative error will be high regardless of the choice of sparsity parameter $\rho$. On the other hand, $u$ can be recovered accurately using the empirical kernel to solve the PDE, since the empirical kernel is an inner product of snapshots of the solutions $u_i$, each of which lies in the same solution space $H^1_0 (\Omega)$ as $u$.

\begin{figure}[tbh]
\centering
\subfloat{\label{fig:darcy_1}
\includegraphics[width=.31\textwidth]{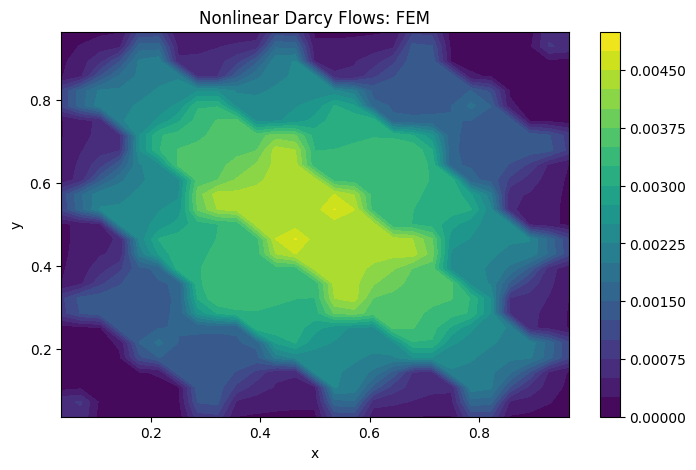}
 }
\hfill
\subfloat{\label{fig:darcy_2}
\includegraphics[width=.31\textwidth]{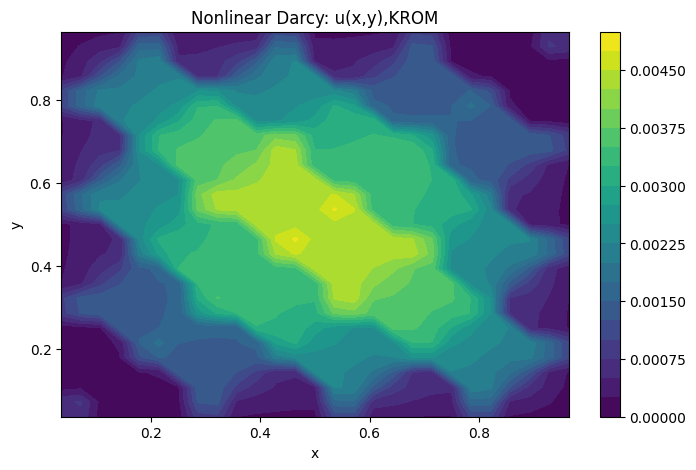}
}
\hfill
\subfloat{\label{fig:darcy_3}
\includegraphics[width=.31\textwidth]{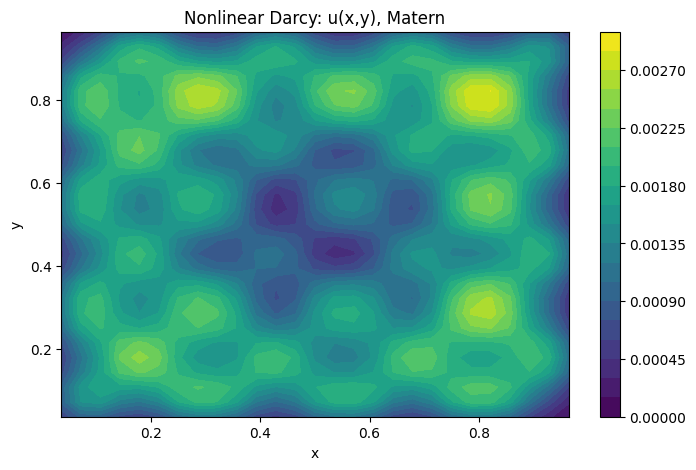}
}
\hfill
\caption{The Darcy Flow numerical solution obtained by a finite element method (left), an empirical kernel with $N=40$ solutions (centre) and a Matérn kernel with lengthscale parameter $\theta =0.3$ (right). We have used sparsity parameter $\rho = 4, M=32^2$ collocation points and 2 Gauss-Newton iterations to solve the PDE for both kernels.}
\label{darcy}
\end{figure}

\begin{figure}[tbh]
\centering
\subfloat{\label{fig:darcy_4}
\includegraphics[width=.45\textwidth]{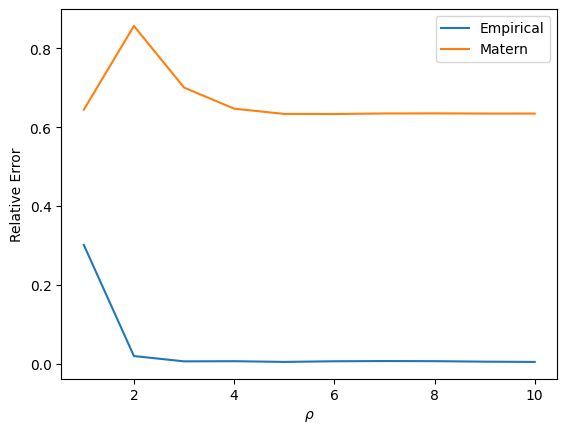}
}
\hfill
\subfloat{\label{fig:darcy_5}
\includegraphics[width=.45\textwidth]{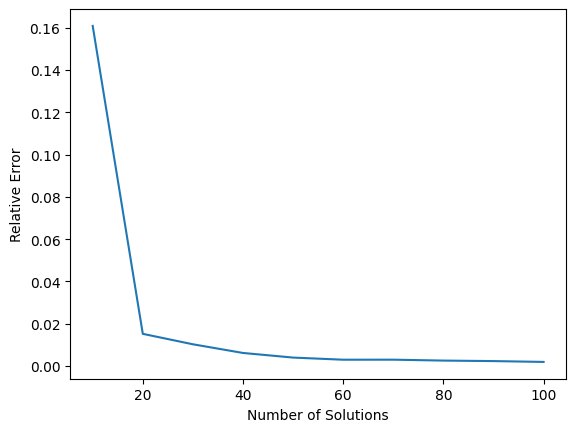}
}
\hfill
\caption{Relative error of the solution to the Darcy Flow PDE obtained by the empirical kernel for $M=32^2$ collocation points as sparsity parameter $\rho$ increases while the number of solutions used to construct the empirical kernel $N=200$ stays fixed (left); and as $N$ increases for fixed $\rho=4$ (right).}
\label{darcy_error}
\end{figure}

\begin{figure}[tbh]
\centering
\subfloat{\label{fig:darcy_6}
\includegraphics[width=.45\textwidth]{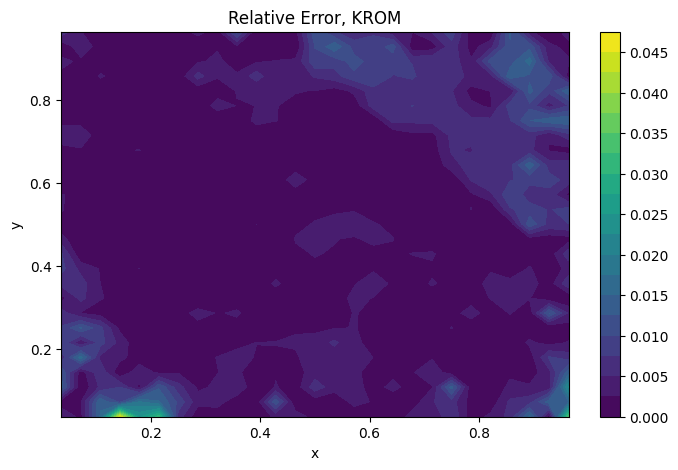}
}
\hfill
\subfloat{\label{fig:darcy_7}
\includegraphics[width=.45\textwidth]{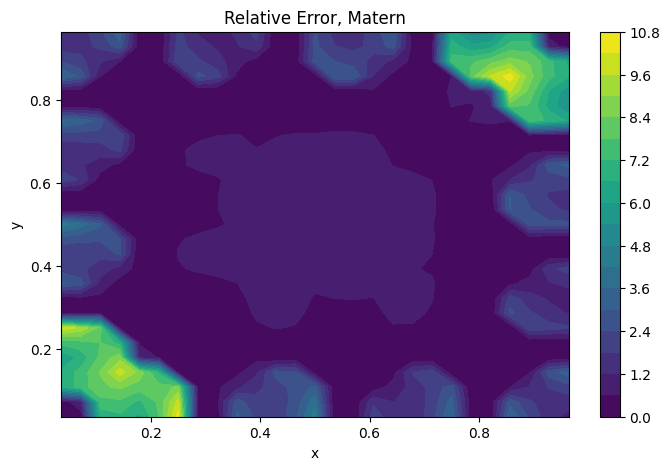}
}
\hfill
\caption{Pointwise relative error of the solution to the Darcy Flow PDE obtained by solving the PDE for $M=32^2, \rho = 4$ with the empirical (left) and Matérn-2.5 (right) kernels. We have used $N=40$ solutions to construct the empirical kernel to generate this error plot.}
\label{darcy_pointwise}
\end{figure}

\subsection{Burgers' Equation}
\label{sec:burgers}

A common testbed for nonlinear advection-diffusion problems, the one-dimensional viscous Burgers' equation is:
\begin{align}\label{burgers}
    \begin{cases}
         \frac{\partial u}{\partial t}(t,x) + u(t,x) \frac{\partial u}{\partial x}(t,x)
  = \nu \frac{\partial^2 u}{\partial x^2}(t,x), \quad (t,x) \in (0,T)\times \Omega,\\
  u(0,x)=u_0, \quad x \in \Omega,\\
        u(t,x) = g(x), \quad (t,x) \in [0,T]\times\partial \Omega,
    \end{cases}
\end{align}
 where
\begin{itemize}
  \item $u(x,t)$ is a scalar field (e.g., velocity),
  \item $\nu$ is the viscosity (often small in test problems).
\end{itemize}

The main challenges for solving the Burgers equation are that in the limit of small $\nu$, shocks (very steep gradients) can develop, challenging low-dimensional approximations. Furthermore, like many PDEs, the nonlinear advection term can lead to instability if not carefully handled in a reduced setting.

Consider \eqref{burgers} with $\Omega=[-1,1]$, viscosity parameter $\nu = 0.001$, initial condition $u(x,0)=-\sin(\pi x)$ and boundary conditions $u(-1,t)=0=u(1,t)$. To generate solutions for the empirical kernel, we consider $8$ initial conditions of the form $\sum^{10}_{i=1} a_i (\cos(b_i \pi x) + \sin(b_i \pi x))$, where the coefficients $a_i$ are drawn from a standard normal distribution and the $b_i$ terms are generated from the discrete uniform distribution $U\{1,2\}$, and solve the corresponding Burgers equation with a fifth-order WENO scheme from time $t=0$ to $t=1$ with 2000 points in space in the interval $[-1,1]$ and a time step of $h=10^{-3}$. We exploit the translational invariance in $x$ of solutions of the Burgers equation by shifting each of these periodic solutions by $x=-0.8,-0.6,\dots,0.6,0.8$; this yields us a total of 80 solutions. Once we construct the empirical kernel $K(x,y) = \frac{1}{N_T} \frac{1}{N} \sum^N_{i=1} \sum^T_{j=1} u_i (x,t_j) u_i (y,t_j)$, we perform a sparse Cholesky decomposition of the precision matrix $K^{-1}$ as in the previous examples. We use the following Crank-Nicolson discretization with a step size of $\Delta t=0.04$ to obtain a differential equation for time $t_{n+1}$, which we forward solve in time with the empirical and Matérn-2.5 kernels with two Gauss-Newton iterations at each time $t_n$:

\begin{equation}
\frac{\hat{u} (x,t_{n+1})-\hat{u} (x,t_n)}{\Delta t} +\frac{1}{2} (\hat{u} (x,t_{n+1}) \partial_x \hat{u} (x,t_{n+1})+\hat{u} (x,t_n) \partial_x \hat{u} (x,t_n)) = \frac{\nu}{2} (\partial^2_x \hat{u} (x,t_{n+1}) +\partial^2_x \hat{u} (x, t_n))
\label{burgers_discretized}
\end{equation}

\begin{figure}[tbh]
\centering
\subfloat{\label{fig:burgers_1}
\includegraphics[width=.45\textwidth]{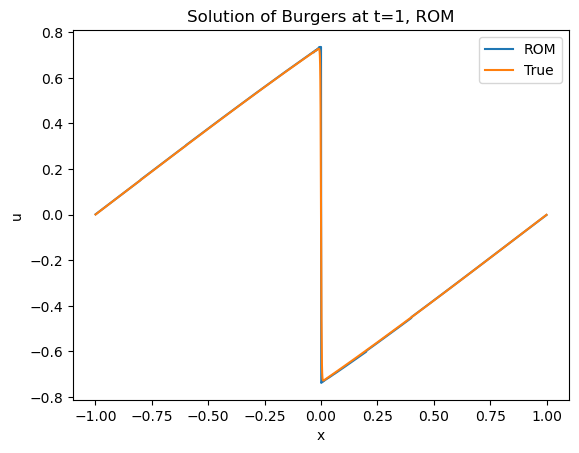}
}
\hfill
\subfloat{\label{fig:burgers_2}
\includegraphics[width=.45\textwidth]{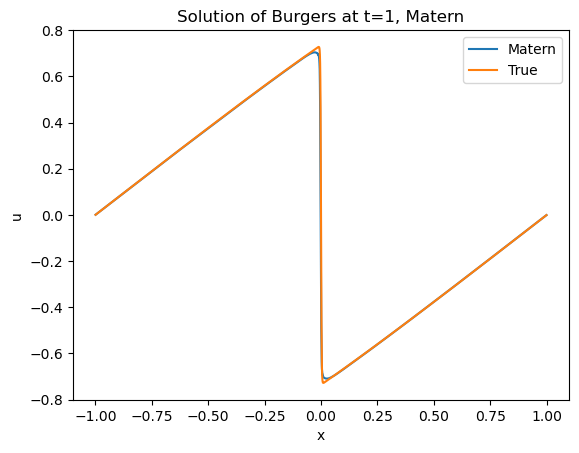}
}
\hfill
\caption{Solving the Burgers equation with the empirical (left) and Matérn-2.5 (right) kernels, sparsity parameter $\rho=5,M=2001$ collocation points and $N=40$ solutions for the empirical kernel.}
\label{burgers_graph}
\end{figure}

A close look at Figure \ref{burgers_graph} shows that while the empirical kernel is able to capture the gradient at the shock at the $t=1$, the Matérn-2.5 kernel with lengthscale $\theta =0.05$ is unable to do the same and instead attempts to 'smooth out' the region where the shock occurs, suggesting that a higher sparsity pattern for the Cholesky factor is required for the Matérn-2.5 kernel to reach the same accuracy as the empirical kernel. This observation is known as the Gibbs phenomenon and is caused by the Gaussian process being unable to explain the jump, spreading the transition between states over a length scale determined by the kernel. On the other hand, the shocks are already present in the solutions $u_i$ which we sum over to create the empirical kernel.

\begin{figure}[tbh]
\centering
\subfloat{\label{fig:burgers_3}
\includegraphics[width=.45\textwidth]{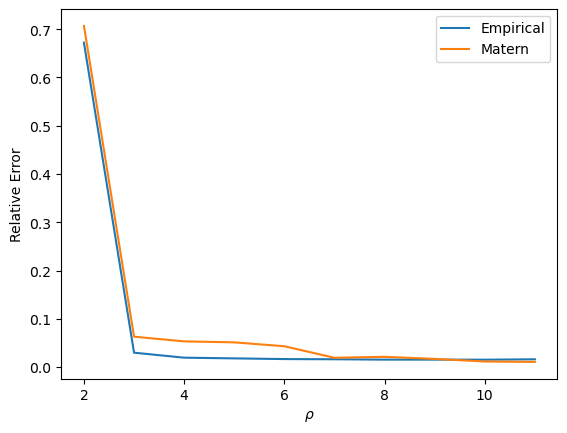}
}
\hfill
\subfloat{\label{fig:burgers_4}
\includegraphics[width=.45\textwidth]{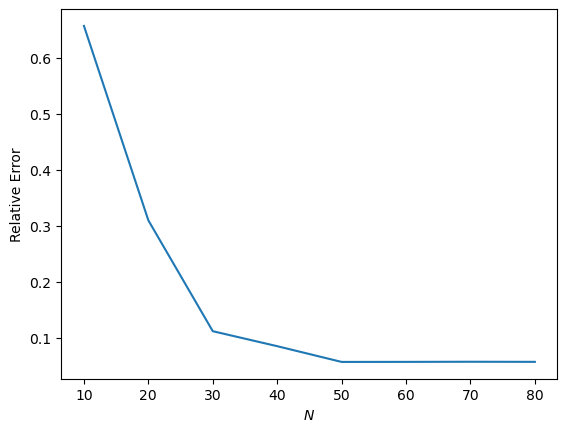}
}
\hfill
\caption{Relative error of the solution to the Burgers equation obtained by the empirical and Matérn-2.5 kernels for $M=2001$ collocation points as sparsity parameter $\rho$ increases for $N=40$ solutions used for the empirical kernel (left); and relative error for the empirical kernel as $N$ increases as $\rho=5$ and $M=2001$ stay fixed (right).}
\label{burgers_error}
\end{figure}

\subsection{Allen--Cahn Equation}
\label{sec:allen_cahn} 
            
The Allen--Cahn equation describes the motion of phase boundaries in a binary system. A common form is:
\begin{align}\label{allen_cahn}
    \begin{cases}
          \frac{\partial u}{\partial t}(t,\mathbf{x}) = \varepsilon^2 \Delta u(t,\mathbf{x}) - F'(u(t,\mathbf{x})), \quad (t,\mathbf{x}) \in (0,T)\times \Omega,\\
  u(0,\mathbf{x})=u_0, \quad \mathbf{x} \in \Omega,\\
        u(t,\mathbf{x}) = g(\mathbf{x}), \quad (t,\mathbf{x}) \in [0,T] \times \partial \Omega,
    \end{cases}
\end{align}
where
\begin{itemize}
  \item $u(\mathbf{x}, t)$ is the order parameter indicating phase states,
  \item $\varepsilon$ is a small parameter related to the interface width,
  \item $F(u)$ is a double-well potential, for example $F(u) = \frac{1}{4}(u^2 - 1)^2$.
\end{itemize}

The Allen-Cahn equation is another common benchmark for ROMs \cite{chaturantabut2010} for several key reasons. First, the sharp transitions in $u$ (the interface between phases) can be difficult to represent with a small number of basis functions. Second, strong nonlinearities in the potential term require careful treatment or hyper-reduction methods. Third, phase-field models often exhibit slow evolution after the initial transient, demanding a robust reduced basis that remains accurate over large time scales.

We consider a 2D version of the Allen-Cahn equation with $F(u) = u^3-u, \varepsilon =0.01$, domain $\Omega = (0,2\pi)^2$ and a homogeneous Dirichlet boundary condition $u(x,t) = 0$ for $x \in \partial \Omega$. To generate the solutions for the empirical kernel $K((x,y),(x',y')) = \frac{1}{N_T} \frac{1}{N} \sum^N_{i=1} \sum^{N_T}_{j=1} u_i (x,y,t_j) u_i (x',y',t_j) $, we generate $N$ initial conditions of the form $u^0_i (x) = \sum^{5}_{n=1} \sum^5_{m=1} a_{nm} \sin (n x) \sin (m y)$, where each $a_{nm}$ is drawn from a standard normal distribution and $n,m \in \{1,2,3,4,5\}$ are picked randomly from a discrete uniform distribution. We use a simple forward-time central-space (FTCS) scheme to solve \eqref{allen_cahn} for each of these initial conditions with $M=51^2$ points over $\Omega$ and a time step of $\Delta t= 10^{-4}$. As we did for the Burgers equation, we use the Crank-Nicolson scheme to solve a time-independent PDE for the next time step until the final time for both the Matérn-2.5 and empirical kernels to test our reduced-order model:

\begin{equation}
    \frac{u^{n+1}-u^n}{\Delta t} = \varepsilon^2 \frac{\Delta u^{n+1} -\Delta u^{n}}{2} - \frac{F'(u^{n+1})-F'(u^n)}{2}
\end{equation}

In the results below, we have used a large step size of $\Delta t= 0.05$ to solve the Allen-Cahn equation with initial condition $u(x,y,0) = 0.25 \sin(3x) \sin(3y)$ up to $t=5$. The numerical solutions obtained by the empirical kernel and the Matérn-2.5 kernel with lengthscale $\theta = 0.1$ at the end time are displayed alongside the FTCS solution in Figure \ref{ac}. The analysis of the results is similar to that of the Burgers equation in Section 3.3. The Matérn kernel, while resembling the FTCS solution, performs worse than the empirical kernel at the boundaries between the different interfaces in the solution as it attempts to smooth the jump between each interface. These very sharp internal layers are already baked into each of the reference solutions used to construct the empirical kernel, which is why it does not suffer from the same problem to the same extent.

\begin{figure}[tbh]
\centering
\subfloat{\label{fig:ac_1}
\includegraphics[width=.31\textwidth]{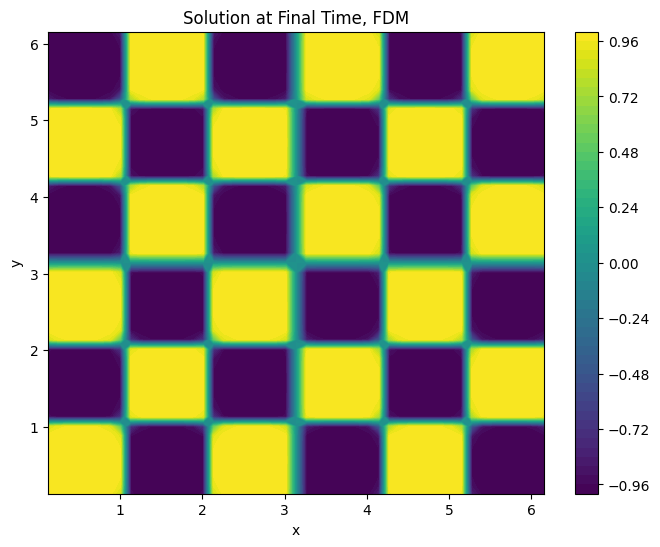}
}
\hfill
\subfloat{\label{fig:ac_2}
\includegraphics[width=.31\textwidth]{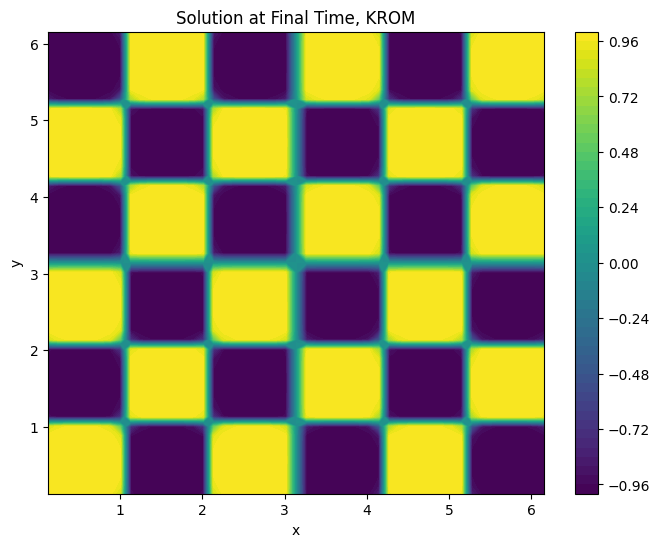}
}
\hfill
\subfloat{\label{fig:ac_3}
\includegraphics[width=.31\textwidth]{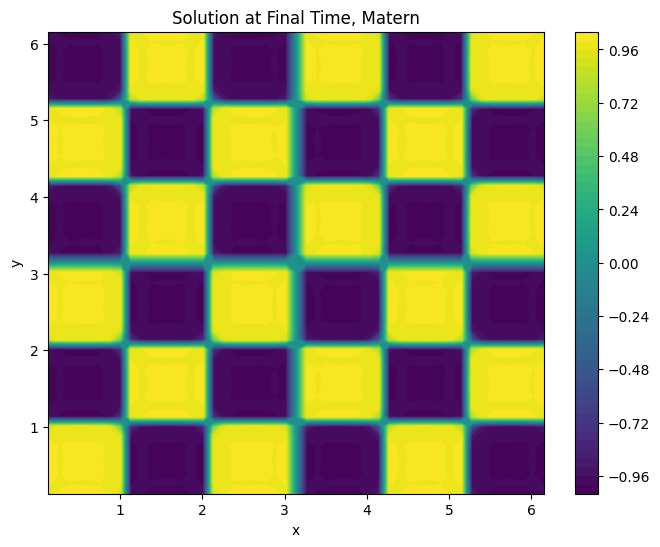}
}
\hfill
\caption{Results for the Allen-Cahn equation for the finite difference method (left), empirical kernel (center) and Matérn-2.5 kernel (right)}
\label{ac}
\end{figure}

\begin{figure}[tbh]
\centering
\subfloat{\label{fig:ac_4}
\includegraphics[width=.45\textwidth]{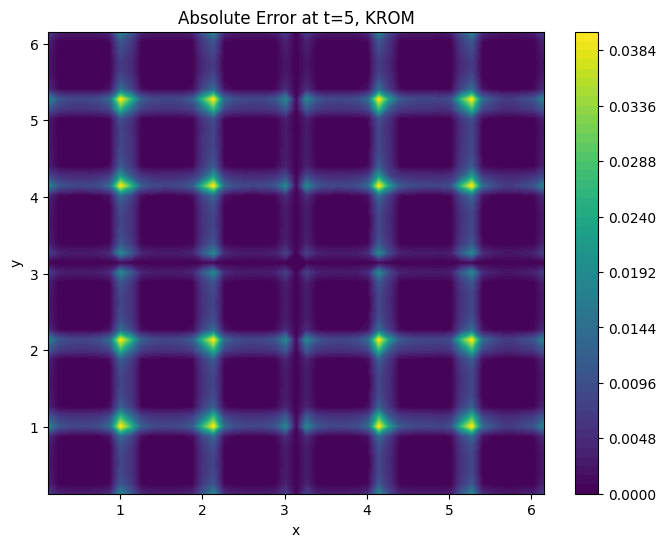}
}
\hfill
\subfloat{\label{fig:ac_5}
\includegraphics[width=.45\textwidth]{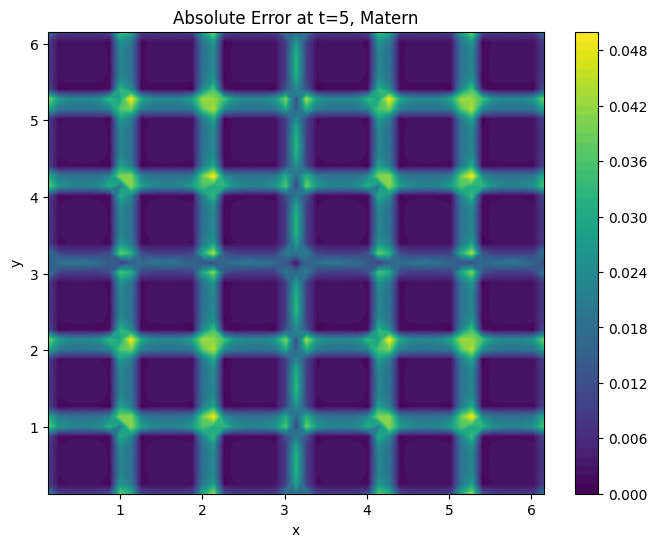}
}
\hfill
\caption{Absolute error of the solution to the Allen-Cahn equation obtained by the empirical and Matérn kernels for $M=51^2$ collocation points, sparsity parameter $\rho=5$ and $N=10$ solutions to generate the empirical kernel; we illustrate the absolute error instead of the relative error as the accuracy at each point can be seen more clearly in the former than in the latter.}
\end{figure}

\begin{figure}[tbh]
\centering
\subfloat{\label{fig:ac_6}
\includegraphics[width=.45\textwidth]{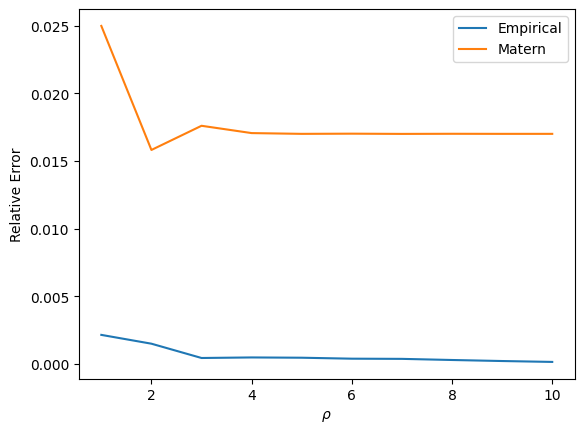}
}
\hfill
\subfloat{\label{fig:ac_7}
\includegraphics[width=.45\textwidth]{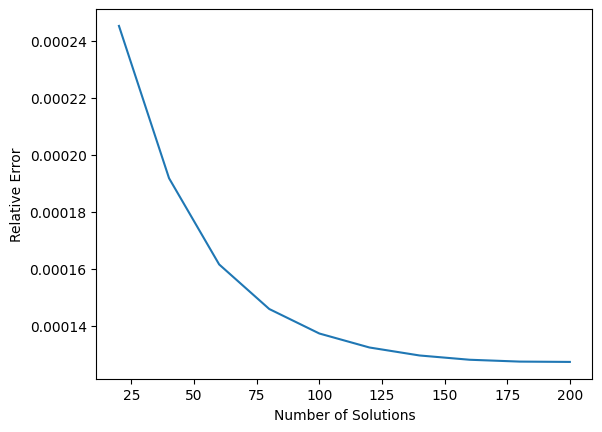}
}
\hfill
\caption{Relative error of the solution to the Allen-Cahn equation obtained by the empirical and Matérn-2.5 kernels as sparsity parameter $\rho$ increases (we have used $N=10$ solutions to create the empirical kernel); and the relative error for the empirical kernel as $N$ increases as $\rho=5$ (right). We have used $M=51^2$ collocation points for all our experiments}
\end{figure}

\subsection{Navier-Stokes Equation}
\label{sec:navier_stokes} 

The incompressible Navier--Stokes equations in two dimensions are given by the following system of equations:
\begin{align}\label{eq:navier-stokes}
    \begin{cases}
           \frac{\partial \mathbf{u}}{\partial t}(t,\mathbf{x}) + (\mathbf{u}(t,\mathbf{x}) \cdot \nabla)\mathbf{u}(t,\mathbf{x})= -\frac{1}{\rho}\nabla p(t,\mathbf{x}) + \nu \nabla^2 \mathbf{u}(t,\mathbf{x}), \quad (t,\mathbf{x}) \in (0,T)\times \Omega,\\
           \nabla \cdot \mathbf{u}(t,\mathbf{x}) = 0\quad (t,\mathbf{x}) \in (0,T)\times \Omega\\
  \mathbf{u}(0,\mathbf{x})=u_0, \quad \mathbf{x} \in \Omega,\\
        \mathbf{u}(t,\mathbf{x}) = \mathbf{g}(\mathbf{x}), \quad (t,\mathbf{x}) \in [0,T]\times \partial \Omega,
    \end{cases}
\end{align}

where 
\begin{itemize}
  \item $\mathbf{u}(x,y,t)$ is the velocity field,
  \item $p(x,y,t)$ is the pressure,
  \item $\rho$ is the density; here $\rho =1$
  \item $\nu$ is the kinematic viscosity; we use $\nu = 10^{-3}$ in our experiments.
\end{itemize}

The Navier-Stokes equation is the most commonly studied problem in fluid mechanics. Much care must be taken when solving Navier-Stokes with ROMs, because projection-based ROMs can introduce instabilities if the chosen reduced subspace and the treatment of nonlinear terms are not sufficiently accurate. Additionally, if the Reynolds number or boundary conditions change, the reduced basis must be robust enough to capture these variations.

Instead of solving for the velocity field $\mathbf{u}$ directly, we solve the equation for the vorticity $\omega$ which is defined by the curl of $\mathbf{u}$:

\begin{equation}
    \partial_t \omega + \mathbf{u} \cdot \nabla \omega = \nu \Delta \omega.
\label{vorticity}
\end{equation}

To recover the velocity from vorticity, we introduce a streamfunction $\psi$ satisfying
\[
-\Delta \psi = \omega,
\qquad
\mathbf u = \nabla^\perp \psi := (\partial_y \psi,\,-\partial_x \psi),
\]
which automatically enforces $\nabla\cdot \mathbf u = 0$ (for periodic domains or with appropriate boundary conditions).

This formulation is advantageous for the 2D Navier-Stokes problem because vorticity is a scalar when the domain is 2D. In order to create our empirical kernel matrix for the vorticity, we generate $N$ random smooth fields using $8$ Fourier modes. Specifically, we take initial conditions of the form

\[
u_0(x,y)
=
\operatorname{Re}\!\left(
\sum_{\substack{|k_1|\le 8 \\ |k_2|\le 8}}
\frac{Z_{k_1,k_2}}{1 + k_1^2 + k_2^2}
\,e^{i\,(k_1 x + k_2 y)}
\right),
\]

with $\mathbf{k}=(k_1,k_2)$ being the wavevector and

\[
Z_{k_1,k_2} = \xi_{k_1,k_2}\,e^{i\phi_{k_1,k_2}},
\qquad
\xi_{k_1,k_2}\sim\mathcal N(0,1),
\quad
\phi_{k_1,k_2}\sim\mathrm{Unif}(0,\pi).
\]

and solve the corresponding vorticity equation with a forward Euler scheme and the Poisson equation for the streamfunction with a fast Fourier transform (FFT). We then test our KROM with another randomly generated initial condition and use step size of $\Delta t =0.01$ for the Crank-Nicolson discretization that allows us to forward solve the PDE in time with kernels.

We plot the results at the final time $t=5$ for the empirical kernel 
\begin{align}
K_N ((x,y),(x',y')) = \frac{1}{N_T} \frac{1}{N} \sum^N_{i=1} \sum^{N_T}_{j=1} u_i (x,y,t_j)u_i (x',y',t_j)
\end{align}
and the Matérn-2.5 kernel with parameter $\theta = 0.3$ and also the spectral energy decay in Figures \ref{ns} and \ref{energy_spectrum}. The relative error for the Matérn-2.5 kernel stays nearly unchanged despite increasing the sparsity parameter $\rho$ because the 2D Navier-Stokes violate a number of fundamental assumptions about the Matérn-2.5 kernel. Firstly, solutions to 2D Navier-Stokes are non-stationary in space whereas the Matérn kernel is stationary. More importantly, the vorticity spectrum $\mathbb{E} [|\hat{\omega} (k,t)|^2]$ is time dependent whereas the Matérn-$\nu$ kernel assumes a spectrum that is proportion to $(1+|k|^2)^{-(\nu + d/2}$ with $(\nu - d/2)=3.5$ for all time. Furthermore, a Matérn kernel assumes one dominant correlation length across space and ignores multiscale behavior such as vorticity filaments and sharp gradients. This is shown in Figure \ref{energy_spectrum}, where the Matérn kernel exhibits a steeper decay in the energy spectrum than the classical solution as the Matérn posterior tends to underrepresent energy and enstrophy at high $k$ and smooth out sharp gradients.

The empirical kernel attempts to avoid these problems by approximating the test solution as a linear span of reference solutions, but while this performs better than the Matérn kernel, the relative error does not monotonically decrease as a function of the number of solutions $N$ in the empirical kernel. This is because the solution manifold and flow map of the 2D Navier-Stokes equation is highly nonlinear and cannot be approximated by an arbitrary linear combination of different solutions. The energy spectrum decay $E(k)$ of the empirical kernel decays slower than the classical solution. One possible explanation is that since we are summing over all the $N$ solutions across all the time steps in the empirical kernel, a lot of high-wavenumber Fourier components present in the snapshot fields are being accumulated and inherited in our empirical kernel solution at each time.

\begin{figure}[tbh]
\centering
\subfloat{\label{fig:ns_1}
\includegraphics[width=.31\textwidth]{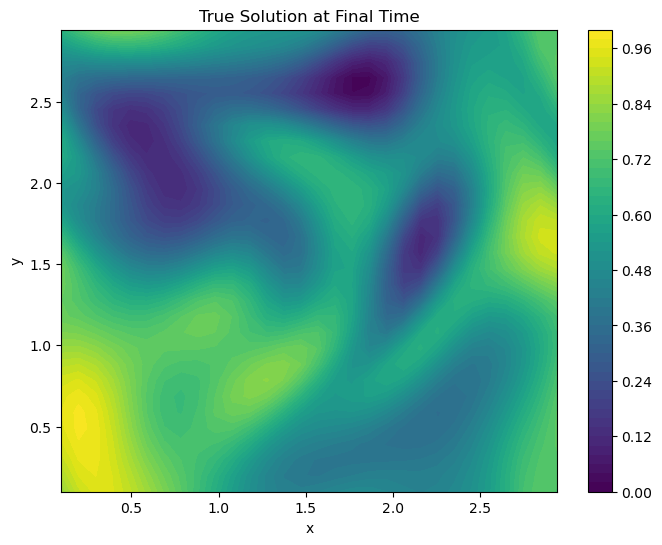}
}
\hfill
\subfloat{\label{fig:ns_2}
\includegraphics[width=.31\textwidth]{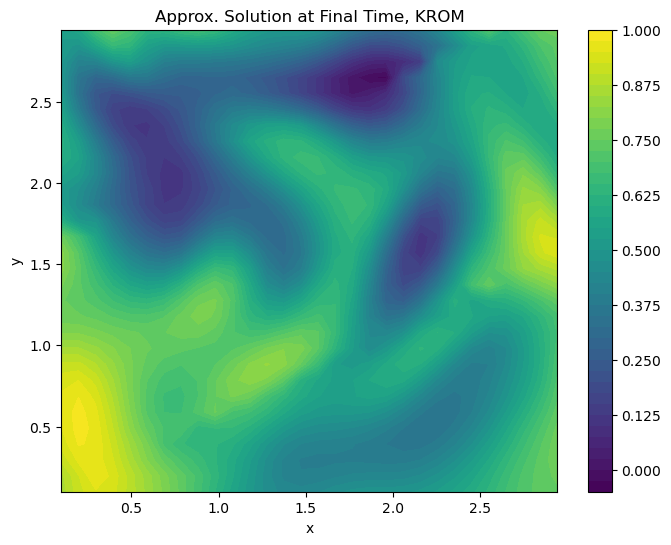}
}
\hfill
\subfloat{\label{fig:ns_3}
\includegraphics[width=.31\textwidth]{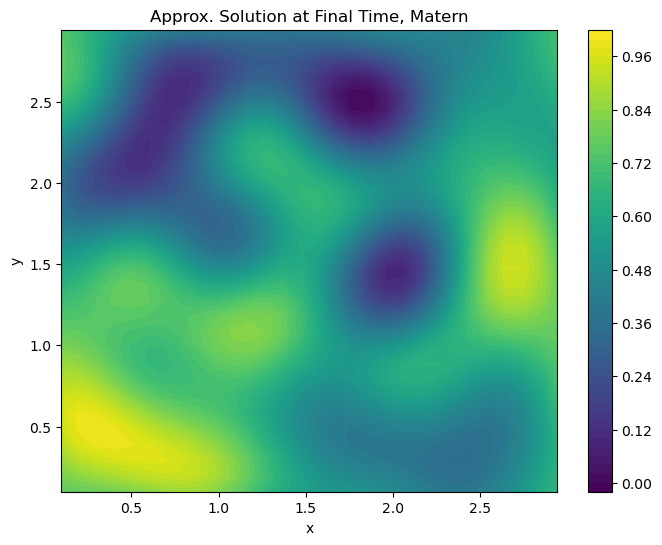}
}
\hfill
\caption{Solution of the 2D Navier-Stokes equation with a classical solver (left), empirical kernel with sparsity parameter $N=50$ solutions (centre) and Matérn-2.5 kernel with lengthscale parameter $\theta=0.3$ (right). We have used $M=32^2$ collocation points and sparsity parameter $\rho=5$ for both kernels.}
\label{ns}
\end{figure}

\begin{figure}[tbh]
\centering
\subfloat{\label{fig:ns_4}
\includegraphics[width=.45\textwidth]{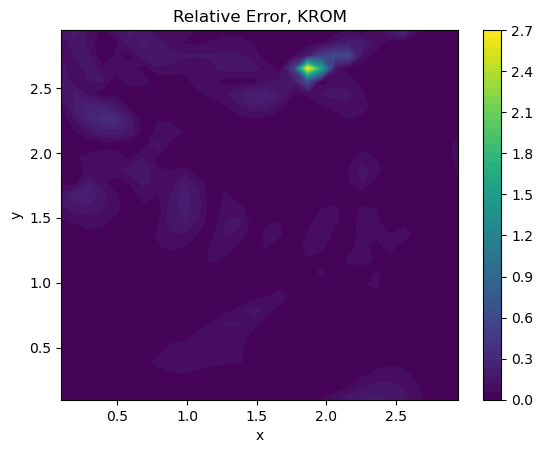}
}
\hfill
\subfloat{\label{fig:ns_5}
\includegraphics[width=.45\textwidth]{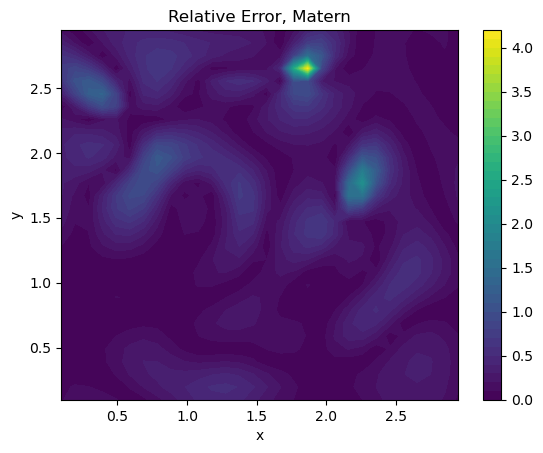}
}
\hfill
\caption{Relative error of the solution to 2D Navier-Stokes obtained by the empirical (left) and Matérn-2.5 (right) kernels for $M=32^2$, $\rho=5, N=100$}
\label{ns_pointwise}
\end{figure}

\begin{figure}[tbh]
\centering
\subfloat{\label{fig:ns_6}
\includegraphics[width=.45\textwidth]{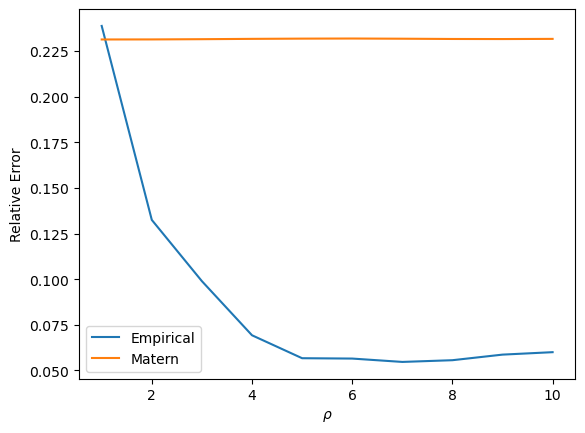}
}
\hfill
\subfloat{\label{fig:ns_7}
\includegraphics[width=.45\textwidth]{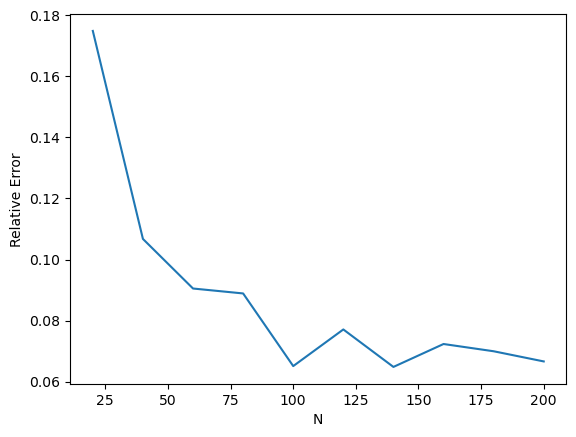}
}
\hfill
\caption{Relative error of the solution to the 2D Navier-Stokes obtained by the empirical kernel for $M=32^2$ as $\rho$ increases while $N=100$ (left) and as $N$ increases as $\rho=5$ (right).}
\label{ns_error}
\end{figure}

\begin{figure}[tbh]
\centering
\label{fig:energy_cascade}
  \includegraphics[width=.51\textwidth]{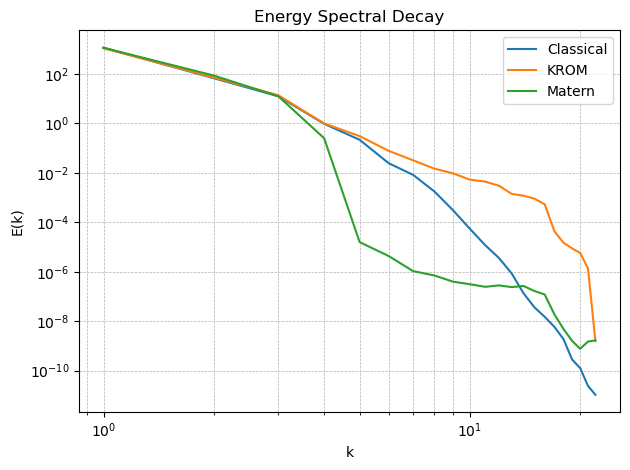}
\hfill
\caption{Energy spectrum decay of the Navier-Stokes equation at final time $t=5$ with different solvers against $k$, the magnitude of the wavevector.} 
\label{energy_spectrum}
\end{figure}

\begin{table}[ht!] \label{table.summary}
    \centering
    \renewcommand{\arraystretch}{1.1}
    \resizebox{\columnwidth}{!}{
    \begin{tabular}{r | c c c}
        experiment set 
            & Semilinear elliptic 
            & Darcy flow 
            & Burgers' 1D \\
        \hline
        PDE type 
            & stationary, semilinear 
            & stationary, nonlinear 
            & time-dependent, advection--diffusion \\
        domain $\Omega$ 
            & $(0,1)^2$ 
            & $(0,1)^2$ 
            & $[-1,1]$ \\
        BC / IC 
            & $u=g = 0 $ on $\partial\Omega$ 
            & $p=g = 0$ on $\partial\Omega$ 
            & $u(\pm1,t)=0$, $u(x,0)=-\sin(\pi x)$ \\
        data distribution 
            & $f$ from GP, $\sigma=0.15$ 
            & $f$ from GP, $\sigma=0.2$ 
            & random trigonometric ICs \\
        collocation grid $M$ 
            & $M = 32^2$ 
            & $M = 32^2$ 
            & $M = 2001$ \\
        snapshots $N$ 
            & $N = 10,20,\dots,200$ FEM solutions 
            & $N = 10,20,\dots,200$ FEM solutions 
            & $N=10,20,\dots,80$ WENO solutions \\
        sparse parameter $\rho$ 
            & $\rho = 4$ 
            & $\rho = 4$ 
            & $\rho = 5$ \\
        kernels 
            & empirical,\; Matérn--2.5 $(\theta=0.3)$ 
            & empirical,\; Matérn--2.5 $(\theta=0.3)$ 
            & empirical,\; Matérn--2.5 $(\theta=0.05)$ \\
        time discretization 
            & Gauss--Newton (3 steps) 
            & Gauss--Newton (2 steps) 
            & Crank--Nicolson, $\Delta t = 0.04$ \\
        solver details 
            & GN in RKHS 
            & GN in RKHS 
            & GN in RKHS \\
        Section 
            & \Cref{sec:semilinear_elliptic} 
            & \Cref{sec:darcy} 
            & \Cref{sec:burgers} \\
        \hline
        experiment set 
            & Allen--Cahn 
            & Navier--Stokes 2D \\
        \hline
        PDE type 
            & time-dependent, phase field 
            & incompressible flow (vorticity) \\
        domain $\Omega$ 
            & $(0,2\pi)^2$ 
            & periodic $(0,2\pi)^2$ \\ 
        BC / IC 
            & $u=0$ on $\partial\Omega$, $u(x,0)=0.25\sin(3x_1)\sin(3x_2)$ 
            & periodic in space, prescribed vorticity IC \\
        data distribution 
            & random trigonometric ICs 
            & band-limited, low-pass–filtered random Fourier series \\
        collocation grid $M$ 
            & $M = 51^2$ 
            & $M = 32^2$ \\
        snapshots $N$ 
            & $N = 10,\,20,\dots,200$ FTCS solutions 
            & $N = 10,\,20,\dots,200$ vorticity fields \\
        sparse parameter $\rho$ 
            & $\rho = 5$ 
            & $\rho = 5$ \\
        kernels 
            & empirical,\; Matérn--2.5 $(\theta=0.1)$ 
            & empirical,\; Matérn--2.5 $(\theta=0.3)$ \\
        time discretization 
            & Crank--Nicolson, $\Delta t = 0.05$ 
            & Crank--Nicolson, $\Delta t = 0.01$ \\
        solver details 
            & GN in RKHS 
            & GN in RKHS, FFT Poisson solve \\
        Section 
            & \Cref{sec:allen_cahn} 
            & \Cref{sec:navier_stokes} \\

    \end{tabular}
    }
    \caption{Summary of numerical experiments for KROM. 
    The row \emph{data distribution} describes how right-hand sides or initial conditions are sampled to generate snapshot solutions used to build the empirical kernel. 
    $M$ denotes the number of collocation points in the kernel discretization, $N$ the number of snapshot solutions, and $\rho$ the sparsity parameter in the sparse Cholesky approximation of $K(\phi,\phi)^{-1}$. 
    Matérn--2.5 kernels with the indicated lengthscale $\theta$ serve as baseline GP solvers.}
    \label{table:krom_pde_params}
\end{table}

\section{Theoretical Analysis}
In this section, we provide an error analysis of KROM in terms of the sparsity parameter $\rho$, the fill distance $h$, and the number of solution snapshots to construct the empirical kernel.

\subsection{Error bounds for the empirical kernel}
In this section, we want to show that the choice of an empirical kernel leads to error bounds in terms of the fill distance. More precisely, we show that if the fill distance goes to zero, i.e., the number of collocation points goes to infinity, the optimal recovery solution converges to the true solution. There holds

\begin{theorem}[Empirical-kernel error estimate and convergence in $N$]\label{thm:empirical_kernel_rate}
Assume the hypotheses of Theorem~1.2 (in particular, the well-posedness/stability assumptions for the IBVP operator and the sampling inequality assumptions that yield an $h^\gamma$--rate). 
Let $U_N$ be the RKHS induced by the empirical kernel constructed from snapshots $\{u_i\}_{i=1}^N$, so that $U_N=\mathrm{span}\{u_1,\dots,u_N\}$ as a set.
Let $u^\star$ denote the unique (true) solution of \eqref{eq:ibvp_main}. 

Let $X_{\Omega_T}\subset [0,T]\times\Omega$ be a set of $M$ interior collocation points (and similarly include boundary/initial collocation sets if present in \eqref{eq:min_norm_main}); define the spatial fill distance
\[
h_{\Omega} := \sup_{x_0 \in \Omega} \inf_{x \in X_{\Omega}} \|x_0 - x\| .
\]
Let $u_{N,h}^\dagger\in U_N$ be a minimizer of the RKHS optimal recovery problem \eqref{eq:min_norm_main} posed over $U_N$ with collocation set(s) of fill distance $h_\Omega$.

Fix $\ell\ge 0$ and assume $s>\ell+\tfrac12$, and let $\gamma:= s-\ell>0$. Then there exist constants $h_0>0$ and $C>0$ such that for all $h_\Omega<h_0$ we have the \emph{two-term} estimate
\begin{equation}\label{eq:empirical_two_term}
\|u_{N,h}^\dagger-u^\star\|_{H^\ell(\Omega)}
\;\le\;
C\, h_\Omega^{\gamma}\,\|u_N^\star\|_{U_N}
\;+\;
\inf_{v\in U_N}\|v-u^\star\|_{H^\ell(\Omega)},
\end{equation}
where $u_N^\star$ is any element of $U_N$ satisfying the continuous constraints of \eqref{eq:ibvp_main} (in particular, if $u^\star\in U_N$ then one can take $u_N^\star=u^\star$), and the constant $C$ is independent of $h_\Omega$ and the minimizer. 

Moreover, suppose in addition that:
\begin{enumerate}
\item\label{ass:uniform_embedding}
there exists a constant $C_{\mathrm{emb}}>0$, independent of $N$, such that
\[
\|v\|_{H^s(\Omega)}\le C_{\mathrm{emb}}\|v\|_{U_N}\qquad\forall v\in U_N,
\]
\item\label{ass:density}
the snapshot spaces approximate the truth in $H^\ell$ in the sense that
\[
\inf_{v\in U_N}\|v-u^\star\|_{H^\ell(\Omega)} \xrightarrow[N\to\infty]{} 0,
\]
\item\label{ass:bounded_comparator}
there exists a sequence $\{\tilde v_N\}_{N\ge 1}$ with $\tilde v_N\in U_N$ and $\sup_N\|\tilde v_N\|_{U_N}<\infty$ such that $\|\tilde v_N-u^\star\|_{H^\ell(\Omega)}\to 0$,
\end{enumerate}
then $\|u_N^\star\|_{U_N}$ is bounded uniformly in $N$ and, for any sequence $h_\Omega=h_\Omega(N)\downarrow 0$ satisfying
\[
h_\Omega(N)^{\gamma}\,\|u_N^\star\|_{U_N}\xrightarrow[N\to\infty]{}0,
\]
we have
\[
\|u_{N,h_\Omega(N)}^\dagger-u^\star\|_{H^\ell(\Omega)}\xrightarrow[N\to\infty]{}0.
\]
In the special case $u^\star\in U_N$ for all sufficiently large $N$, the bias term vanishes and \eqref{eq:empirical_two_term} reduces to the single-term bound
\begin{equation}\label{eq:empirical_single_term}
\|u_{N,h}^\dagger-u^\star\|_{H^\ell(\Omega)}
\;\le\;
C\, h_\Omega^{\gamma}\,\|u^\star\|_{U_N}.
\end{equation}
\end{theorem}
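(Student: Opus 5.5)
The plan is to reduce the estimate to the fill-distance result already invoked in the hypotheses, namely the bound of ``Theorem~1.2''. That bound is applied with the RKHS $U_N$ in place of a generic kernel space, and the residual modeling error is then handled by a triangle inequality.

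\textbf{Step 1: minimum-norm comparison.} Let $u_N^\star\in U_N$ satisfy the continuous constraints of \eqref{eq:ibvp_main}. Then $u_N^\star$ in particular satisfies the discrete constraints at all collocation points, so it is feasible for \eqref{eq:min_norm_main} posed over $U_N$. Minimality of $u_{N,h}^\dagger$ therefore gives
\[
\|u_{N,h}^\dagger\|_{U_N}\le \|u_N^\star\|_{U_N}.
\]

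\textbf{Step 2: sampling inequality and stability.} The difference $w:=u_{N,h}^\dagger-u_N^\star$ has vanishing PDE residuals (after linearization, or via the stability assumption for the nonlinear operator) at every collocation point. The sampling inequality of Theorem~1.2 gives, for $h_\Omega<h_0$ and with $s>\ell+\tfrac12$,
\[
\|u_{N,h}^\dagger-u_N^\star\|_{H^\ell}\le C h_\Omega^{\gamma}\bigl(\|u_{N,h}^\dagger\|_{H^s}+\|u_N^\star\|_{H^s}\bigr).
\]
The $H^s$ norms are then controlled by the $U_N$ norms through the embedding. For fixed $N$ this embedding holds because $U_N$ is finite dimensional and the snapshots lie in $H^s$; under assumption~\eqref{ass:uniform_embedding} it holds uniformly in $N$. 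Combining with Step~1 yields the bound $C h_\Omega^\gamma\|u_N^\star\|_{U_N}$.

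\textbf{Step 3: triangle inequality.} Next I compare $u_N^\star$ with $u^\star$. The two-term form \eqref{eq:empirical_two_term} should follow by splitting
\[
u_{N,h}^\dagger-u^\star=(u_{N,h}^\dagger-u_N^\star)+(u_N^\star-u^\star),
\]
and then choosing $u_N^\star$ as close as possible to $u^\star$. This is the delicate point: $u_N^\star$ must both satisfy the constraints and realize the infimum. I would instead treat the second term via well-posedness. The stability estimate for the IBVP bounds $\|u_N^\star-u^\star\|_{H^\ell}$ by a constant times the best-approximation error, with the constant absorbed into the definitions. If $u^\star\in U_N$, the choice $u_N^\star=u^\star$ makes this term vanish exactly and gives \eqref{eq:empirical_single_term} directly.

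\textbf{Step 4: convergence in $N$.} Assumption~\eqref{ass:bounded_comparator} provides comparators $\tilde v_N$ with uniformly bounded $U_N$ norm that converge to $u^\star$. I would use them to construct constraint-satisfying $u_N^\star$ with $\sup_N\|u_N^\star\|_{U_N}<\infty$. One way is to take $u_N^\star$ as the $U_N$-minimum-norm element satisfying the constraints, whose norm is bounded by that of any feasible comparator. If the $\tilde v_N$ are not exactly feasible, I would correct them by a small feasible perturbation controlled by stability.

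Given this uniform bound, $h_\Omega(N)^\gamma\|u_N^\star\|_{U_N}\to0$. Assumption~\eqref{ass:density} sends the bias term to zero, so the total error in \eqref{eq:empirical_two_term} tends to zero.

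The main obstacle is this feasibility issue: producing $u_N^\star$ that satisfies the constraints exactly while having a controlled $U_N$ norm, and relating it to the infimum term. I would first attempt to resolve it by exploiting the linear structure of the constraints after lifting, before falling back on the stability constant.
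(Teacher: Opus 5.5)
Your Steps 1--2 follow the paper's route: a triangle split, then Theorem~1.2 applied with $u_N^\star$ as target, then finite-dimensional norm equivalence for fixed $N$ and assumption~(1) for uniformity. Your comparison $\|u_{N,h}^\dagger\|_{U_N}\le\|u_N^\star\|_{U_N}$ usefully makes explicit why only $\|u_N^\star\|_{U_N}$ appears on the right.

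The gap is in the bias term. Saying that stability bounds $\|u_N^\star-u^\star\|_{H^\ell(\Omega)}$ by ``a constant times the best-approximation error, with the constant absorbed'' does not prove \eqref{eq:empirical_two_term}, whose bias term carries constant $1$. Nor does well-posedness give a C\'ea-type quasi-optimality here; it bounds errors by residuals. The missing observation is that the residual of $u_N^\star$ is zero, because $u_N^\star$ satisfies the continuous constraints. By the assumed uniqueness (or the stability estimate with zero residual), $u_N^\star=u^\star$. So the second term of the split vanishes and the two-term bound follows from your Step~2 alone. In fact $u^\star\in U_N$, so the infimum is zero as well, and whenever $u_N^\star$ exists you are in the single-term case. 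The paper's one-line justification (``by the definition of best approximation'') runs that inequality the wrong way; uniqueness is what actually makes it true.

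Your proposed repair in Step~4 fails for the same reason. Every feasible element of $U_N$ equals $u^\star$, so the minimum-norm feasible $u_N^\star$ is $u^\star$, and the claim to prove is exactly $\sup_N\|u^\star\|_{U_N}<\infty$. A ``small feasible perturbation'' of $\tilde v_N$ would have to be $u^\star-\tilde v_N$, and nothing controls its $U_N$-norm. Assumption~(1) bounds $H^s$ by $U_N$, not conversely, and stability only turns residuals into $H^\ell$ errors. Hence uniform boundedness does not follow from (1)--(3) when the comparators are merely $H^\ell$-convergent.

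The paper gets the bound by inserting $\tilde v_N$ as a competitor in both minimum-norm problems, which is legitimate only if $\tilde v_N$ is feasible. State that hypothesis explicitly (equivalently, assume $\sup_N\|u^\star\|_{U_N}<\infty$) rather than attempting a perturbation.

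Finally, the convergence claim does not need this bound at all, since $h_\Omega(N)^\gamma\|u_N^\star\|_{U_N}\to 0$ is assumed. What you need is that $C$ (and $h_0$) in \eqref{eq:empirical_two_term} are independent of $N$, which assumption~(1) provides, together with assumption~(2).
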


\begin{proof}
    The proof will be presented in Appendix \ref{sec:appendix_proofs}.
\end{proof}

\subsubsection{Interpreting the snapshot-space bias term via Kolmogorov $N$-widths.}
The approximation term 
\[
\inf_{v\in\mathcal H_N}\|v-u^\star\|_{H^\ell(\Omega)},
\]
is a standard reduced-order modeling quantity: it measures how well the snapshot space captures the new solution. If $\mathcal M:=\{u(\mu):\mu\in\mathcal P\}$ denotes the solution manifold over a parameter/forcing set $\mathcal P$, then the best achievable worst-case error over all $N$-dimensional subspaces is the Kolmogorov $N$-width
\[
d_N(\mathcal M;H^\ell):=\inf_{\substack{W\subset H^\ell(\Omega)\\ \dim W=N}}\ \sup_{u\in\mathcal M}\ \inf_{v\in W}\|u-v\|_{H^\ell(\Omega)}.
\]
Thus, for a well-chosen snapshot space $\mathcal H_N$, the bias term can be viewed as an instance of this $N$-width decay. 

\subsection{Kolmogorov \texorpdfstring{$N$}{N}-widths} \label{se.greedy}

The numerical results show that the relative error decreases with an approximately exponential decay in the number of snapshots $N$ we use to construct our empirical kernel. Interestingly, we observe that (sub)-exponential decay despite the fact that we randomly generate those solutions. In this section, we want to show that if we chose those solutions that aim to approximate the solution manifold by an greedy algorithm, the relative error decreases sub-exponentially in $N$. First, we introduce the notion of the Kolmogorov $N$-width with which we measure how well the solution manifold can be approximated by a finite dimensional linear space.

\subsubsection{Approximation properties of the solution manifold}

We briefly recall the notion of Kolmogorov $N$-width and its relevance for reduced-order modeling of parametrized PDEs.

\begin{definition}[Solution manifold and Kolmogorov $N$-width]
Let $V$ be a Hilbert space (e.g., $V = H^s_0(\Omega)$), and let
\[
   \mathcal M := \{ u(\mu) : \mu \in \mathcal D\} \subset V
\]
denote the solution manifold associated with a parametrized PDE, where
$\mathcal D \subset \mathbb R^P$ is a compact parameter set
(for instance, $\mu$ encodes forcing terms, coefficients, or boundary data).
The Kolmogorov $N$-width of $\mathcal M$ in $V$ is defined by
\[
  d_N(\mathcal M;V)
  :=
  \inf_{\substack{V_N \subset V\\ \dim V_N = N}}
  \;
  \sup_{u \in \mathcal M} \;
    \inf_{v \in V_N} \|u - v\|_V.
\]
It measures the best possible worst-case error attainable by any $N$-dimensional linear approximation space.
\end{definition}

We are interested in situations where $d_N(\mathcal M;V)$ decays faster than any algebraic rate in $N$.
For coercive linear elliptic problems with analytic parameter dependence this is known to hold.

\begin{assumption}[Coercive analytic parametric dependence]\label{ass:analytic}
Let $a(\cdot,\cdot;\mu)$ denote a bilinear form on $V \times V$ and $f(\cdot;\mu)\in V'$ the right-hand side of a variational formulation
\[
  a(u(\mu),v;\mu) = f(v;\mu) \qquad \forall v \in V,\ \mu\in\mathcal D.
\]
Assume that:
\begin{enumerate}
  \item (Uniform coercivity and boundedness)
        There exist $\alpha,\gamma>0$ such that for all $\mu\in\mathcal D$,
        \[
           \alpha \|v\|_V^2 \le a(v,v;\mu), \qquad
           |a(w,v;\mu)| \le \gamma \|w\|_V \|v\|_V,
           \quad \forall v,w\in V.
        \]
  \item (Affine and analytic parameter dependence)
        The bilinear form and the data admit an affine expansion
        \[
          a(w,v;\mu) = \sum_{q=1}^Q \Theta_q(\mu)\,a_q(w,v), \qquad
          f(v;\mu)   = \sum_{q=1}^{Q_f} \Theta^f_q(\mu)\,f_q(v),
        \]
        where the coefficient functions $\Theta_q,\Theta^f_q:\mathcal D\to\mathbb R$ extend analytically
        to a complex neighbourhood of $\mathcal D$.
\end{enumerate}
\end{assumption}

Under Assumption~\ref{ass:analytic}, $\mathcal M$ is a compact, analytically parametrized manifold in $V$.
This structure leads to very fast decay of the Kolmogorov $N$-width.

\begin{theorem}[Sub-exponential decay of $d_N(\mathcal M;V)$]\label{thm:nwidth}
Suppose Assumption~\ref{ass:analytic} holds. Then there exist constants
$C,c>0$ and an exponent $\alpha\in(0,1]$, depending only on the parameter dimension
and the analytic regularity, such that
\begin{equation}\label{eq:nwidth-subexp}
  d_N(\mathcal M;V) \le C \exp\bigl(-c N^{\alpha}\bigr), \qquad N\ge 1.
\end{equation}
In particular, for single-parameter problems ($P=1$) one typically has
$\alpha = 1$ and thus a purely exponential decay $d_N(\mathcal M;V)\le C e^{-cN}$,
while for multi-parameter problems one obtains a root-exponential (or, more generally,
sub-exponential) decay of the form $d_N(\mathcal M;V)\le C e^{-c N^{1/P}}$ (up to
logarithmic factors).
\end{theorem}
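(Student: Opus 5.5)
The plan is to follow the classical route for analytic parametric problems. First show that the parameter-to-solution map $\mu\mapsto u(\mu)$ extends holomorphically to a complex neighbourhood of $\mathcal D$, with a uniform bound. Then build an explicit $N$-dimensional polynomial approximation space in the parameter, whose worst-case error bounds $d_N(\mathcal M;V)$ from above.

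\textbf{Step 1: holomorphic extension.} Complexify $V$ and use the affine expansion from Assumption~\ref{ass:analytic}. Since the $\Theta_q$ and $\Theta^f_q$ are analytic, they are uniformly continuous on a compact complex neighbourhood $\mathcal D_\delta$ of $\mathcal D$. Uniform coercivity on $\mathcal D$ then gives, for $\delta$ small enough,
\[
\mathrm{Re}\, a(v,\bar v;z)\ge \tfrac{\alpha}{2}\|v\|_V^2, \qquad z\in\mathcal D_\delta.
\]
This perturbation step is where the bound $|a_q(w,v)|\lesssim \|w\|\|v\|$ for each summand is needed, and I would state it as part of the boundedness hypothesis. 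Lax--Milgram on $\mathcal D_\delta$ then yields a unique solution $u(z)$ with
\[
\|u(z)\|_V\le \frac{2}{\alpha}\,\sup_{z\in \mathcal D_\delta}\|f(\cdot;z)\|_{V'}=:B .
\]
Holomorphy of $z\mapsto u(z)$ follows from the implicit function theorem: the operator $A(z)$ is holomorphic and boundedly invertible, so $A(z)^{-1}F(z)$ is holomorphic.

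\textbf{Step 2: polynomial approximation in the parameter.} Enclose $\mathcal D$ in a box $[-1,1]^P$ after an affine rescaling. The box is contained in a polyellipse $\mathcal E_\varrho^P$ inside $\mathcal D_\delta$, for some $\varrho>1$. This needs $\mathcal D$ to sit in a box on whose neighbourhood the $\Theta$'s are analytic; otherwise I would cover $\mathcal D$ by finitely many such boxes and concatenate the spaces, which only changes constants.

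Expand $u$ in tensorised Chebyshev or Legendre polynomials. The Bernstein estimate gives coefficients of size $\|u_\nu\|_V\le C B\varrho^{-|\nu|_1}$. Truncate to the total-degree set $\Lambda_k=\{|\nu|_1\le k\}$ and set
\[
V_N:=\mathrm{span}\{u_\nu:\nu\in\Lambda_k\}, \qquad \dim V_N\le N:=\#\Lambda_k\sim k^P/P!.
\]
Summing the tail gives
\[
\sup_{\mu\in\mathcal D}\inf_{v\in V_N}\|u(\mu)-v\|_V \le C' k^{P-1}\varrho^{-k}.
\]
Substituting $k\sim (P!N)^{1/P}$ yields $d_N\le C\exp(-cN^{1/P})$, where the polynomial prefactor is absorbed by slightly decreasing $c$. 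This gives $\alpha=1/P$, and $\alpha=1$ in the single-parameter case, as claimed. For $N$ strictly between consecutive values of $\#\Lambda_k$, use monotonicity of $d_N$ in $N$ and adjust constants.

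\textbf{Main obstacle.} The delicate point is Step 1: ensuring that coercivity survives complexification uniformly on a neighbourhood whose size is independent of everything else, since the radius $\varrho$ and hence $c$ depend on it. If a direct perturbation argument is too crude, I would instead use the inf-sup form of Babuška's theorem. That form also makes explicit that $c$ depends only on $\alpha$, $\gamma$, $P$ and the analyticity radius, matching the statement.
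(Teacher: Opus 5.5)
Your proof is correct. The paper itself gives no argument for Theorem~\ref{thm:nwidth}. It asserts the result as known for analytic affine problems, and the following remark only cites greedy reduced-basis results; those \emph{transfer} an assumed width bound to greedily selected snapshot spaces but do not prove the bound itself.

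Your route is the standard one and actually establishes the claim:
\begin{itemize}
\item uniform complexified coercivity on a $\delta$-neighbourhood of $\mathcal D$;
\item holomorphy of $z\mapsto A(z)^{-1}F(z)$;
\item Chebyshev coefficient decay $\varrho^{-|\nu|_1}$ on a polyellipse, followed by total-degree truncation;
\item a finite cover by small boxes when $\mathcal D$ is not itself a box.
\end{itemize}
It gives the explicit exponent $\alpha=1/P$, with $c$ depending on $\alpha$, $\gamma$, the sizes of the $a_q,f_q$ and the analyticity radius.

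Two refinements. First, you do not need boundedness of the individual $a_q$ (or $f_q$) as an extra hypothesis, because it follows from the stated assumption. After merging terms whose $\Theta_q$ are linearly dependent on $\mathcal D$, pick $\mu_1,\dots,\mu_Q\in\mathcal D$ with $(\Theta_q(\mu_j))_{j,q}$ invertible. Each $a_q$ is then a fixed linear combination of the forms $a(\cdot,\cdot;\mu_j)$, and hence bounded by a multiple of $\gamma$.

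Second, your $V_N$ is spanned by Chebyshev coefficients rather than by solutions, whereas KROM works in $U_N=\mathrm{span}\{u(\mu_i)\}$. The paper bridges this only through greedy theory, but your framework gives it directly, say for $\mathcal D=[-1,1]^P$. Replace truncation by interpolation on the downward-closed set $\Lambda_k$ at tensorised Leja nodes $\mu_j\in\mathcal D$. The interpolant then lies in $\mathrm{span}\{u(\mu_j)\}$, with error at most $(1+L_k)$ times your truncation error. The Lebesgue constant $L_k$ grows only polynomially in $N$, so it is absorbed into $c$ exactly as your prefactor $k^{P-1}$ was.
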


\begin{remark}[Implications for the empirical-kernel ROM]
The empirical kernel space $U_N := \mathcal H_N = \mathrm{span}\{u_1,\dots,u_N\}$
is an $N$-dimensional linear approximation space for the solution manifold $\mathcal M$.
If the snapshot solutions $\{u_i\}$ are chosen so as to approximate $\mathcal M$
(e.g., by a greedy reduced-basis strategy), the resulting spaces $U_N$ are known
to realize the same asymptotic rate of decay as the Kolmogorov $N$-width
(see, e.g., \cite{binev2011greedy,devore2012greedy,haasdonk2013podgreedy}).
Concretely, if $d_N(\mathcal M;V)\lesssim \exp(-cN^{\alpha})$,
then the worst-case approximation error
\[
  \sup_{\mu\in\mathcal D} \inf_{v\in U_N} \|u(\mu)-v\|_V
\]
inherits the same sub-exponential decay in $N$, up to different constants.
Since our Gaussian–process based solver performs an optimal recovery
in $U_N$, the numerical
errors we observe as $N$ increases are consistent with the sub-exponential
(or even exponential) decay predicted by Theorem~\ref{thm:nwidth}.
\end{remark}

As mentioned in the previous remark, we obtain an (sub)-exponential decay of the error in terms of the number of snapshots if the snapshots are chosen by a greedy reduced-basis strategy for the elliptic problem induced by the bilinear form $a$. However, it remains an open question whether this theoretical bound can be proven for general nonlinear problems as we have observed in our numerical experiments. We also observe the (sub)-exponential decay when choosing the snapshots randomly.

\subsection{Error estimates in the sparsity parameter\texorpdfstring{$\rho$}{rho}}

Our numerical experiments also show that the error decays (sub)-exponentially in the sparsity parameter $\rho$. This has been proven for the Matérn kernel in \cite{yifan2025}. However, for other kernels, in particular the empirical kernel, this remains an open question and will be investigated in future works. 

\section{Conclusion}
We introduced \emph{KROM}, a kernelized reduced-order modeling framework for fast solving of nonlinear PDEs that combines two complementary ideas: (i) \emph{empirical kernels} constructed from snapshot libraries to encode problem-specific solution structure, and (ii) \emph{sparse Cholesky factorization} of the precision matrix to make Gaussian-process optimal recovery scalable. Unlike classical ROMs that require intrusive projection and hyper-reduction and the classical GP for PDE method that requires the right choice of the kernel and hyperparameter tuning, KROM remains non-intrusive and enforces the governing equations through operator constraints, while sparsification yields an implicit reduced model by selecting only a small, geometrically localized set of effective degrees of freedom. Because the sparse Cholesky algorithm is based on a rank-revealing max–min ordering of the collocation points, it inherently functions as a built-in reduced-order model.

Across a range of benchmark nonlinear PDEs (semilinear elliptic problems, discontinuous-coefficient Darcy flow, viscous Burgers, Allen--Cahn, and two-dimensional Navier--Stokes), we observed that snapshot-driven kernels can substantially outperform generic kernels such as Mat\'ern or Gaussian kernel in regimes with sharp gradients, discontinuities, or strongly anisotropic dynamics. This is consistent with the operator-theoretic interpretation of empirical kernels as finite-rank surrogates of regularized inverses and with the fact that homogeneous linear constraints (such as boundary conditions) can be satisfied by construction when they are shared by the snapshots. Our theory complements these observations by providing an explicit error decomposition that separates (a) discretization/collocation effects, (b) snapshot/modeling error due to the finite empirical kernel space, and (c) sparse-Cholesky approximation error controlled by the sparsity radius.

Several directions are natural next steps. On the modeling side, it is important to develop principled snapshot selection strategies (e.g., greedy or active learning) and to extend the kernel construction to efficiently accommodate parameterized geometries and inhomogeneous constraints via lifting or kernel augmentation. On the algorithmic side, adaptive choices of collocation sets and sparsity patterns, as well as multi-level or domain-decomposed variants of sparse Cholesky, could further improve robustness for multi-scale flows. Finally, incorporating noise models and posterior uncertainty into goal-oriented error indicators would enable reliable adaptive refinement and real-time digital-twin applications. 

\section{Acknowledgments}

The authors acknowledge support from the Air Force Office of Scientific Research under MURI award number FOA-AFRL-AFOSR-2023-0004 (Mathematics of Digital Twins), the Department of Energy under award number DE-SC0023163 (SEA-CROGS: Scalable, Efficient, and Accelerated Causal Reasoning Operators, Graphs and Spikes for Earth and Embedded Systems), the National Science Foundation under award number 2425909 (Discovering the Law of Stress Transfer and Earthquake Dynamics in a Fault Network using a Computational Graph Discovery Approach). HO acknowledges support from the DoD Vannevar Bush Faculty Fellowship Program under ONR award number N00014-18-1-2363. 

\bibliographystyle{siam} 
\bibliography{references.bib}

\appendix

\section{Reproducing Kernel Hilbert Space}
In this section, we want to state some facts about reproducing kernel Hilbert spaces (RKHS) and the representer formula in the general case and the special case where the RKHS space is generated by the empirical kernel constructed above.

In the following result, we characterize the RKHS space generated by the empirical kernel.  

\begin{theorem}
    Let $\Omega\subset \mathbb{R}^d$ be a bounded Lipschitz domain and let $\mathcal{F}$ be a suitable function space. Furthermore, let $\lbrace u_i\rbrace_{i=1}^N\subset \mathcal{F}\backslash \lbrace 0\rbrace$ be a set of linearly independent functions and $k:\Omega\times \Omega\rightarrow \mathbb{R}$ be given by $k(x,y)=\frac{1}{N} \sum_{i=1}^N u_i(x)u_i(y)$.
Then, the following statements hold:
\begin{itemize}
\item[$i)$] The function $k$ defines a kernel. 
\item[$ii)$] The reproducing kernel Hilbert space $\mathcal{H}$ generated by the kernel $k$ satisfies $\mathcal{H}=\mathrm{span}\lbrace u_1, u_2, \dots, u_N \rbrace$.
\item[$iii)$] For a functions $f,g\in \mathcal{H}$ with $f=\sum_{i=1}^N \alpha_i u_i$ and $g=\sum_{i=1}^N \beta_i u_i$ for values $\alpha_i, \beta_i\in \mathbb{R}$, the inner product on $\mathcal{H}$ is given by $\langle f,g\rangle_{\mathcal{H}}= N \sum_{i=1}^{N} \alpha_i \beta_i$.
\end{itemize}
\end{theorem}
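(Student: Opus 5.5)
The plan is to verify the three claims directly from the definitions, building the candidate Hilbert space by hand and then invoking the uniqueness part of the Moore--Aronszajn theorem. Throughout, I interpret ``suitable function space'' as meaning that point evaluations $x\mapsto u_i(x)$ are well defined, for instance $\mathcal{F}\subset C(\Omega)$. I also take linear independence in the pointwise sense: $\sum_i c_i u_i(x)=0$ for all $x\in\Omega$ forces $c=0$.

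For $i)$, symmetry $k(x,y)=k(y,x)$ is immediate from the definition. For positive semidefiniteness, take points $x_1,\dots,x_m\in\Omega$ and coefficients $c\in\mathbb{R}^m$. Exchanging the finite sums gives
\[
\sum_{j,l=1}^m c_j c_l\, k(x_j,x_l)=\frac1N\sum_{i=1}^N\Bigl(\sum_{j=1}^m c_j u_i(x_j)\Bigr)^2\ge 0 .
\]
Hence $k$ is a (positive semidefinite) kernel.

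For $ii)$ and $iii)$, set $\mathcal{H}_0:=\mathrm{span}\{u_1,\dots,u_N\}$ and define
\[
\langle f,g\rangle_{\mathcal{H}_0}:=N\sum_{i=1}^N\alpha_i\beta_i \quad\text{for } f=\sum_i\alpha_i u_i,\ g=\sum_i\beta_i u_i .
\]
The first step, and the only place where care is required, is well-definedness. Because the $u_i$ are linearly independent, the coefficient vectors $\alpha,\beta$ are uniquely determined by $f,g$. The map $f\mapsto\alpha$ is therefore a linear isomorphism $\mathcal{H}_0\cong\mathbb{R}^N$. It follows that $\langle\cdot,\cdot\rangle_{\mathcal{H}_0}$ is $N$ times the pulled-back Euclidean inner product: it is bilinear, symmetric and positive definite. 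Since $\mathcal{H}_0$ is finite dimensional, it is complete, so it is a Hilbert space of functions on $\Omega$.

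Next I check the two defining properties of a reproducing kernel for $\mathcal{H}_0$. First, for fixed $x$,
\[
k(\cdot,x)=\sum_{i=1}^N \frac{u_i(x)}{N}\,u_i\in\mathcal{H}_0,
\]
with coefficient vector $\beta_i=u_i(x)/N$. Second, for $f=\sum_i\alpha_iu_i$,
\[
\langle f,k(\cdot,x)\rangle_{\mathcal{H}_0}=N\sum_i\alpha_i\frac{u_i(x)}{N}=f(x).
\]
Thus $k$ is a reproducing kernel for $\mathcal{H}_0$. By the uniqueness statement of Moore--Aronszajn (a kernel determines its RKHS together with its inner product), $\mathcal{H}=\mathcal{H}_0$ with the stated inner product, which proves $ii)$ and $iii)$ simultaneously.

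The main obstacle is the well-definedness step above; the remaining arguments are routine. Without linear independence the representation $f=\sum_i\alpha_iu_i$ is not unique, and the formula in $iii)$ would be replaced by a minimum over all representations. As a consistency check, the factor $N$ in the inner product exactly compensates the $1/N$ normalisation of $k$.
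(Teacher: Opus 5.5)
Your proof is correct, and for $ii)$ and $iii)$ it takes a different, more self-contained route than the paper.

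For $i)$ the two arguments coincide. The paper factors the Gram matrix as $K=UU^\top$ with $U_{ik}=u_k(x_i)/\sqrt{N}$, so $c^\top Kc=\|U^\top c\|^2$, which is your sum of squares. You rightly claim only semidefiniteness. The paper's added ``equality iff $c=0$'' fails once the number of points exceeds $N$, since $\operatorname{rank}K\le N$.

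For $ii)$ the paper treats $\mathrm{span}\{u_i\}\subset\mathcal{H}$ as immediate. It obtains the reverse inclusion from the representer theorem, noting that each section $k(x_j,\cdot)$, and hence the optimal-recovery minimizer, lies in the span. As written, this locates minimizers rather than arbitrary elements of $\mathcal{H}$. It would need an extra density step: kernel sections span a dense subspace, and a finite-dimensional span is closed.

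For $iii)$ the paper writes $f=\langle\sqrt{N}\alpha,\Phi(\cdot)\rangle$ with $\Phi=(u_1,\dots,u_N)/\sqrt{N}$ and reads off $N\alpha^\top\beta$. This implicitly uses the feature-map description of an RKHS, $\|f\|_{\mathcal{H}}=\min\{\|w\|: f=\langle w,\Phi(\cdot)\rangle\}$, with linear independence making $w$ unique.

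Your approach builds $\mathcal{H}_0$, checks directly that $k(\cdot,x)\in\mathcal{H}_0$ and that the reproducing identity holds, then appeals to Moore--Aronszajn uniqueness. This settles $ii)$ and $iii)$ together from one standard fact and shows exactly where linear independence is used. The paper's feature-map viewpoint, in exchange, extends with little change to linearly dependent snapshots. This matches your closing remark about taking a minimum over representations.
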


\begin{proof}
    Part $i)$: Symmetry is straightforward. Positive definiteness comes from the fact that the empirical kernel matrix $K$ with entries $K_{ij} = k(x_i,x_j) = \sum^N_{k=1} u_k (x_i) u_k (x_j)$ can be decomposed as $K=UU^\top$, where $U_{ik} = \frac{1}{\sqrt{N}} u_k (x_i)$. Then for any vector $c \in \mathbb{R}^M$, $c^\top K c = ||U^\top c||^2 \geq 0$ with equality if and only if $c=0$. Hence $k$ is a reproducing kernel generated by the linear span of $\lbrace u_i\rbrace_{i=1}^N$ with a unique RKHS.
    
    Part $ii)$: To prove that $\mathcal{H} = \textrm{Span}\{u_1,\dots,u_N\}$, we must first show that any function of the form $\sum^{N}_{i=1} \alpha_i u_i$, where $\alpha_i \in \mathbb{R}$, belongs to $\mathcal{H}$; and that all functions $f \in \mathcal{H}$ can also be decomposed as $f = \sum^{N}_{i=1} \alpha_i u_i$. The forward direction follows directly from the definition of the span. The reverse direction is a corollary of the representer formula \eqref{eq.representer} for the empirical kernel: 

    By the representer theorem, the minimizer $u^\dagger$ of \eqref{eq.min_fin} in $\mathcal H_N$ admits an expansion of the form
\[
  u^\dagger(\cdot) = \sum_{j=1}^M \alpha_j\, k_N(x_j,\cdot),
\]
where $\{x_j\}_{j=1}^M$ denote the locations (or more generally the linear functionals) entering the constraints and $\alpha_j\in\mathbb R$. For each fixed $x_j$, the empirical kernel takes the form
\begin{align}\label{eq:kernel.form}
  k_N(x_j,x)
  = \frac{1}{N}\sum_{i=1}^N u_i(x_j)\,u_i(x),
  \qquad x\in \Omega,
\end{align} 
so $k_N(x_j,\cdot)$ is a linear combination of $\{u_i\}_{i=1}^N$ and hence belongs to
$\mathrm{span}\{u_1,\dots,u_N\}$.
Therefore,
\[
  u^\dagger(\cdot)
  = \sum_{j=1}^M \alpha_j k_N(x_j,\cdot)
  = \sum_{j=1}^M \alpha_j \Bigl( \frac{1}{N}\sum_{i=1}^N u_i(x_j)\,u_i(\cdot)\Bigr)
  = \sum_{i=1}^N c_i u_i(\cdot),
\]
where
\(
  c_i = \frac{1}{N}\sum_{j=1}^M \alpha_j u_i(x_j).
\)
Thus $u^\dagger \in \mathrm{span}\{u_1,\dots,u_N\}$, which proves the claim.

    Part $iii)$: Finally, we verify that $\mathcal{H}$ is indeed a Hilbert space, we use the reproducing property. Let $f = \sum^N_{i=1} \alpha_i u_i, g = \sum^N_{i=1} \beta_i u_i$ for $f,g \in \mathcal{H}$. Since $u_1,\dots,u_N$ are linearly independent, we can use the reproducing property to write $f (\cdot) = \langle w, \Phi (\cdot) \rangle$ and $g (\cdot) = \langle v, \Phi (\cdot) \rangle$, where $w = \sqrt{N} \alpha, v = \sqrt{N} \beta, \Phi = \frac{1}{\sqrt{N}}(u_1,\dots,u_N)$ , hence

    \begin{equation}
    \langle f,g, \rangle_\mathcal{H} = \langle \sqrt{N} \alpha, \sqrt{N} \beta \rangle = N \sum^N_{i=1} \alpha_i \beta_i
    \end{equation}

    which is an inner product on $\mathcal{H}$.
\end{proof}

In the following, we present a very relevant and prominent example.

\begin{example}
    Let $\mathcal{L}: \mathrm{H}_0^s(\Omega)\rightarrow \mathrm{H}^{-s}(\Omega)$ be a linear bounded operator such that $\mathcal{L}$ as a complete and orthogonal set of eigenfunctions denoted by $\mathcal{B}:= \lbrace \phi_1, \phi_2, \dots, \rbrace$ and denote with $\lbrace \lambda_1, \lambda_2, \dots, \rbrace$ the corresponding eigenvalues. Then, defining for a fixed $N \in \mathbb{N}$ the empirical kernel by the finite set of eigenfunctions $\mathcal{B}_N:\lbrace \phi_1, \phi_2, \dots, \phi_N\rbrace$. Then, it is well known that the scaled empirical kernel 
    $k_n(x,y)=\sum_{i=1}^N \frac{1}{\lambda_i^s} \phi_i(x)\phi_i(y)$ converges to the Green's function associated with the operator $\mathcal{L}^{s/2}$. In the special case where $\mathcal{L}=(-\Delta)^{s}$ denotes the fractional Laplacian, the RKHS $\mathcal{H}=H^{2s}_0 (\Omega)$ coincides with the fractional Sobolev space. 
\end{example}

Next, we state the so-called representer theorem which shows that the infinite dimensional optimal recovery \eqref{eq:min_norm_main} problem reduces to the infinite dimensional optimization problem \eqref{eq.min_fin}.

\begin{theorem}[Representer Theorem] \label{th:representer.thm}
Let $\mathcal{H}$ be a Hilbert space with inner product $\langle \cdot, \cdot \rangle_{\mathcal{H}}$ and corresponding norm $\| \cdot \|_{\mathcal{H}}$. Let $\{x_i\}_{i=1}^n \subset \mathcal{X}$ be a set of given data points, and consider the optimization problem:
\begin{align}
    \min_{f \in \mathcal{H}} \mathcal{L}(\{f(x_i)\}_{i=1}^n) + \lambda \|f\|_{\mathcal{H}}^2,
\end{align}
where:
\begin{itemize}
    \item $\mathcal{L}: \mathbb{R}^N \to \mathbb{R}$ is a loss function depending on the values $\{f(x_i)\}_{i=1}^n$,
    \item $\lambda > 0$ is a regularization parameter
\end{itemize}

Assume that
\begin{enumerate}
    \item $\mathcal{H}$ is a reproducing kernel Hilbert space (RKHS) with kernel $k: \mathcal{X} \times \mathcal{X} \to \mathbb{R}$,
    \item The loss function $\mathcal{L}$ depends only on the evaluations $\{f(x_i)\}_{i=1}^N$, and is differentiable and convex with respect to these evaluations.
\end{enumerate}

Then the minimizer $f^* \in \mathcal{H}$ of the optimization problem can be expressed as:
\begin{align} \label{eq.representer}
    f^*(x) = \sum_{i=1}^N \alpha_i k(x_i, x),
\end{align}
where $\{\alpha_i\}_{i=1}^N \subset \mathbb{R}$ are coefficients that depend on the specific loss function $\mathcal{L}$ and the regularization parameter $\lambda$.
\end{theorem}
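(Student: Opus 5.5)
The plan is the classical orthogonal-decomposition argument, which uses only the reproducing property and is independent of the particular kernel, so in particular it covers the empirical kernel $k_N$. Write $n=N$ for the number of data points; the statement uses both symbols. Set
\[
V:=\mathrm{span}\{k(x_1,\cdot),\dots,k(x_N,\cdot)\}\subset\mathcal H .
\]
This is a finite-dimensional, hence closed, subspace. Therefore every $f\in\mathcal H$ decomposes uniquely as $f=f_V+f_\perp$ with $f_V\in V$ and $f_\perp\in V^\perp$.

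\textbf{Step 1 (evaluations only see $f_V$).} By the reproducing property,
\[
f(x_i)=\langle f,k(x_i,\cdot)\rangle_{\mathcal H}=\langle f_V,k(x_i,\cdot)\rangle_{\mathcal H}+\langle f_\perp,k(x_i,\cdot)\rangle_{\mathcal H}=f_V(x_i),
\]
because $f_\perp\perp k(x_i,\cdot)$ for every $i$. Hence $\mathcal L(\{f(x_i)\})=\mathcal L(\{f_V(x_i)\})$.

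\textbf{Step 2 (the norm only penalizes $f_\perp$).} By Pythagoras, $\|f\|_{\mathcal H}^2=\|f_V\|_{\mathcal H}^2+\|f_\perp\|_{\mathcal H}^2$. Combining with Step 1, the objective $J(f):=\mathcal L(\{f(x_i)\})+\lambda\|f\|_{\mathcal H}^2$ satisfies
\[
J(f)=J(f_V)+\lambda\|f_\perp\|_{\mathcal H}^2\ge J(f_V),
\]
with strict inequality whenever $f_\perp\neq 0$, since $\lambda>0$. So any minimizer $f^*$ must have $f^*_\perp=0$. Thus $f^*\in V$, i.e.\ $f^*=\sum_{i=1}^N\alpha_i k(x_i,\cdot)$, which is \eqref{eq.representer}.

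\textbf{Step 3 (existence).} The statement presupposes a minimizer, but I would also check that one exists, since this is the only step with real content beyond linear algebra. By Step 2 it suffices to minimize $J$ over the finite-dimensional space $V$. There, $\alpha\mapsto \mathcal L(K\alpha)+\lambda\,\alpha^\top K\alpha$ (with Gram matrix $K_{ij}=k(x_i,x_j)$) is convex and continuous. It is also coercive on $V$, provided $\mathcal L$ is bounded below, which I would add as a mild assumption or assume, e.g.\ $\mathcal L\ge 0$. Existence then follows from a standard argument: a minimizing sequence is bounded in $V$, so a subsequence converges. Uniqueness holds because $\lambda\|\cdot\|^2$ is strictly convex.

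For the constrained problem \eqref{eq:min_norm_main}, which is what the paper actually uses, I would run the same argument with evaluation functionals replaced by the bounded linear functionals $\phi_m$ (point evaluations of derivatives), taking $V=\mathrm{span}\{\phi_m(k(\cdot,\cdot))\}$. Nonlinear constraints $F(\phi(v))=y$ depend only on $\phi(v)=\phi(v_V)$, so feasibility is preserved under projection onto $V$ and the norm does not increase. This yields \eqref{eq.min_fin} and \eqref{eq:representer_main}.

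The main obstacle is Step 3, together with making precise that derivative functionals are bounded on $\mathcal H$. For the empirical kernel that boundedness is trivial, since $\mathcal H$ is finite-dimensional; for general kernels it needs sufficient smoothness of $k$.
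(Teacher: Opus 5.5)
The paper gives no proof of this statement. It quotes the classical representer theorem (Kimeldorf--Wahba, Sch\"olkopf--Herbrich--Smola) in the appendix and uses it without argument, so there is no paper proof to compare against. Your orthogonal-projection argument is the standard proof, and it is correct. Steps 1--2 use only the reproducing property and $\lambda>0$. Convexity and differentiability of $\mathcal L$ play no role in the representation claim, so you actually prove the stronger, loss-agnostic version. Those hypotheses matter only for existence and uniqueness.

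\textbf{Refinements.}
In Step 3 you do not need the extra assumption that $\mathcal L$ is bounded below. Convexity and differentiability give $\mathcal L(y)\ge \mathcal L(0)+\nabla\mathcal L(0)^\top y$, and $|f(x_i)|\le \sqrt{k(x_i,x_i)}\,\|f\|_{\mathcal H}$. Hence $J(f)\ge \mathcal L(0)-C\|f\|_{\mathcal H}+\lambda\|f\|_{\mathcal H}^2$, which gives coercivity, and so existence, from the stated hypotheses alone. Keep that argument in terms of $f\in V$, not $\alpha$. If the Gram matrix is singular, as it is for the empirical kernel whenever there are more data points than snapshots, then $\alpha\mapsto\alpha^\top K\alpha$ is not coercive and $\alpha$ is not unique, although $f^*$ is.

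\textbf{The constrained extension.}
Your last paragraph is a genuine addition. The theorem as stated (penalized problem, point evaluations) does not literally cover the hard-constrained problem \eqref{eq:min_norm_main}. Your ``projection preserves feasibility and does not increase the norm'' argument is exactly what yields \eqref{eq:representer_main}.

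The further step to \eqref{eq.min_fin}, however, needs $K(\phi,\phi)$ to be invertible, i.e.\ the $\phi_m$ must be linearly independent on $\mathcal H$. For the empirical kernel this fails once the number of constraints exceeds the number of snapshots. One then needs a pseudo-inverse on the range of $K(\phi,\phi)$ (or a nugget), and feasibility of the now overdetermined constraints is no longer automatic. Likewise, ``boundedness is trivial in finite dimensions'' presupposes that the derivative functionals are defined on the snapshots at all. That is not automatic for nonsmooth snapshots such as the discontinuous-coefficient Darcy solutions.
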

Here we can see that $\mathcal{X}$ is an arbitrary set. Therefore, instead of choosing $\{x_i\}_{i=1}^N \subset \mathcal{X}$ to be points in the domain $\Omega$, we can choose $\{\phi_i\}_{i=1}^N \subset X^*$ to be linear bounded functionals that are in the dual space of a given Hilbert space $\mathcal{H}$.

We recall that the choice of the kernel in \eqref{eq:kernel.form} corresponds to a local measurements of the functions which leads to a pointwise evaluation of the solutions after applying the representer theorem. However, in most situations, the solutions are not classical which does not permit a pointwise evaluation of the solution. Nevertheless, our GP framework allows for a more general choice of the kernel corresponding to more global measurements of comparing two functions. Using these kernels, we can compare functions that are not continuous and are more suitable for weak solutions. More precisely, we will make use of the following representer theorem:

\section{Sparse Cholesky Factorization}

    The primary challenge of using a naive Gauss-Newton optimization to solve the finite-dimensional problem \eqref{eq.min_fin} is that many algorithms for solving linear equations and inverting the matrix $K(\phi,\phi)$ are of order $\mathcal{O}(N^3)$ complexity in time/space, which makes the PDE solver prohibitively expensive for large numbers of collocation points. To alleviate this issue, we use the sparse Cholesky algorithm introduced in \cite{schaefer2021} and \cite{yifan2025} for approximating the Cholesky factor of $K(\phi,\phi) ^{-1}$ with a space complexity of $\mathcal{O}(N \log^d (N/\epsilon))$ and a time complexity of $\mathcal{O}(N \log^{2d} (N/\epsilon))$. The algorithm returns a permutation matrix $P_{\textrm{Perm}}$ and a sparse upper triangular matrix $U$ with $\mathcal{O}(N \log^d (N\epsilon))$ nonzero entries. The matrices $U$ and $P_{\textrm{Perm}}$ satisfy the error bound, where $||\cdot||_{\textrm{Fro}}$ is the Frobenius norm:

    \begin{equation}
        ||K(\phi,\phi)^{-1}-P^\top_{\textrm{Perm}} UU^\top P_{\textrm{Perm}}||_{\textrm{Fro}} \leq \epsilon.
    \end{equation}
    
    The sparse Cholesky algorithm is based on a maximin (or a coarse-to-fine) reordering of the collocation points $\{\mathbf{x}_i\}_{i \in I}, I = \{1 \dots, M \}$. This works by selecting the first point at random and successively selecting the point $\mathbf{x}_j$ that is furthest away from the already picked points. 
    We list the indices of these ordered points as 
    
    \begin{equation}
    i_{q+1} = \arg \max_{i  \in I \texttt{\textbackslash} \{i_1,\dots,i_q \}} \textrm{dist} (\mathbf{x}_i, \{\mathbf{x}_{i_1},\dots \mathbf{x}_{i_q}\})
    \end{equation}

    After ordering our Dirac measurements under this maximin ordering, we then order the derivative measurements arbitrarily. We define the permutation map $P:I_N \rightarrow I, P(q) = i_q$, where $I_N = \{1 \dots, M_{\partial \Omega}+DM_{\Omega} \}$, which outputs the maximin index of the $q$-th ordered point for the Dirac measurements and our chosen ordering for the derivative measurements. This maximin ordering results in a geometric covering of the $d$-dimensional domain: after $j$ steps in the maximin ordering, the first $j$ reordered collocation points are well-separated and cover the domain with cells of diameter $\epsilon_i \approx c j^{-\frac{1}{d}}$. For any new point $\mathbf{x}_j$, only $\mathcal{O}(1)$ earlier points lie within a ball of radius $C \epsilon_i$. 

    The off-diagonal entries of the Cholesky factor of $K^{-1}$ exhibit exponential decay under this maximin reordering; this leads to the idea of 'zeroing out' these negligible entries to create sparse approximate Cholesky factors. To see why, recall that we first approximate our unknown function $v$ by a Gaussian Process $\xi \sim \mathcal{N}(0,K)$ and define the corresponding Gaussian random variables $Y_i:= [\xi, \phi_i] \sim N(0,K(\phi_i,\phi_i))$. Then we derive the following relation

    \begin{equation}
        \frac{U_{ij}}{U_{jj}} = (-1)^{i \neq j}  \frac{\textrm{Cov}[Y_i,Y_j|Y_{1:j-1 \texttt{\textbackslash}   \{i\}}]}{\textrm{Var}[Y_i|Y_{1:j-1 \texttt{\textbackslash} \{i\}}]},
      \label{eq:screening}
    \end{equation}

    where $U= L^T$. Formula \eqref{eq:screening} links the values of U to the conditional variance of a Gaussian process, and is related to the screening effects in spatial statistics, in which conditioning on GP values at a dense enough set of points between two regions results in points in one region becoming almost independent of points in the other.
    
    We introduce the lengthscale $l_i$, defined as follows:
    
    \begin{equation}
    l_i = \begin{cases}
    \infty, \; i=1 \\
    \min_{j<i} ||\mathbf{x}_{P(i)}-\mathbf{x}_{P(j)}||, \; 2 \leq i \leq  M \\
    l_M, \; i>M
    \end{cases}
    \end{equation}

    For the Dirac measurements, the value of $l_i$ is the distance from each ordered point to the previously ordered point; a smaller $l_i$ indicates a faster decay of element $U_{ij}, i \leq j$ from the diagonal $U_{jj}$.
    
    To control how sparse we want our matrices $U$ to be, we introduce a small hyperparameter $\rho>0$ such that the number of nonzero entries in each row of $U$ is the size of the set $S_{P,l,\rho} = \{(i,j) \subset I_N \times I_N: i\leq j, ||\mathbf{x}_{P(i)},\mathbf{x}_{P(j)}||\leq \rho l_j\} \subset I_N \times I_N$. We use the denotation $s_j = \{i: (i,j) \in S_{P,l,\rho}\}$ to denote the row elements for a given column $j$ of $U$ that are nonzero and $|s_j|$ to represent the number of elements in $s_j$.

    Finally, defining $\Theta = K(\phi,\phi)$, we seek a matrix $U \in S^{\textrm{mtx}_{P,l,\rho}}$ such that the following Kullback-Leibler (KL) divergence is minimized:

    \begin{equation}
        U = \arg \min_{\hat{U} \in S^{\textrm{mtx}_{P,l,\rho}}} \textrm{KL} (\mathcal{N} (0,\Theta)|| \mathcal{N} (0,(\hat{U}\hat{U}^\top)^{-1})).
        \label{eq.KL_min}
    \end{equation}

    The solution to \eqref{eq.KL_min} has the explicit formula where $U$ is defined columnwise as

    \begin{equation}
        U_{s_j,j} = \frac{\Theta^{-1}_{s_j,s_j} \mathbf{e}_{|s_j|}}{\sqrt{\mathbf{e}^\top_{|s_j|} \Theta^{-1}_{s_j,s_j} \mathbf{e}_{|s_j|}}},
    \end{equation}

    where we define $\mathbf{e}_{|s_j|}$ as the canonical basis vector in $\mathbb{R}^{|s_j|}$ with the last entry being 1, and $\Theta^{-1}_{s_j,s_j} = (\Theta_{s_j,s_j})^{-1}$, where $\Theta_{s_j,s_j}$ is the submatrix of $\Theta$ formed by extracting the entries where both the row and column number belong to the set $s_j$.

\section{Proofs} 
\label{sec:appendix_proofs}

\begin{proof}[Proof of Theorem \ref{thm:empirical_kernel_rate}]
We adapt the argument of Theorem~1.2 to the changing RKHS $U_N$, and we isolate the additional approximation error arising from restricting to the snapshot space.

\textit{1. Decomposition into discretization and snapshot-space bias.}
Let $u^\star$ be the true solution of \eqref{eq:ibvp_main}. Let $u_N^\star\in U_N$ be any function in $U_N$ satisfying the \emph{continuous} constraints of \eqref{eq:ibvp_main} (if $u^\star\in U_N$, choose $u_N^\star=u^\star$). Then by the triangle inequality,
\begin{equation}\label{eq:tri_split}
\|u_{N,h}^\dagger-u^\star\|_{H^\ell(\Omega)}
\le
\|u_{N,h}^\dagger-u_N^\star\|_{H^\ell(\Omega)}
+
\|u_N^\star-u^\star\|_{H^\ell(\Omega)}.
\end{equation}
The second term is bounded by $\inf_{v\in U_N}\|v-u^\star\|_{H^\ell(\Omega)}$ by the definition of best approximation, so it remains to bound the first term.

\textit{Second, apply Theorem~1.2 with $u_N^\star$ as the ``target'' in $U_N$.}
Since $u_N^\star\in U_N$ satisfies the continuous PDE constraints, and $u_{N,h}^\dagger$ satisfies the discrete (collocated) constraints by construction, the residual
\[
\bar f := \mathcal{P}(u_{N,h}^\dagger)-\mathcal{P}(u_N^\star)
\]
vanishes at the interior collocation points (and similarly for boundary/initial residuals if these are enforced by collocation). Under the stability assumptions of Theorem~1.2 and the sampling inequality (or Bramble--Hilbert type estimate) used there, one obtains for sufficiently small $h_\Omega$ an estimate of the form
\begin{equation}\label{eq:residual_to_error}
\|u_{N,h}^\dagger-u_N^\star\|_{H^\ell(\Omega)}
\le
C\, h_\Omega^\gamma\, \|u_N^\star\|_{U_N},
\end{equation}
with constants $C,h_0$ independent of $h_\Omega$ and the minimizer. 
The only additional requirement relative to Theorem~1.2 is that the embedding $U_N\hookrightarrow H^s(\Omega)$ needed to control higher derivatives of the residual holds; this is automatic for fixed $N$ (finite-dimensional norm equivalence) and is uniform in $N$ under Assumption~\ref{ass:uniform_embedding}.

Combining \eqref{eq:tri_split} and \eqref{eq:residual_to_error} gives \eqref{eq:empirical_two_term}.

\textit{Step 3: Uniform boundedness in $N$.}
Assume \ref{ass:bounded_comparator}. Since $u_{N,h}^\dagger$ minimizes the $U_N$-norm over the discrete constraint set, it satisfies the standard optimal-recovery inequality
\[
\|u_{N,h}^\dagger\|_{U_N} \le \|\tilde v_N\|_{U_N}\le \sup_N\|\tilde v_N\|_{U_N}<\infty.
\]
If additionally $u_N^\star$ is chosen as the $U_N$-minimum-norm element among continuous-feasible functions in $U_N$, then similarly $\|u_N^\star\|_{U_N}\le \|\tilde v_N\|_{U_N}$, hence $\sup_N\|u_N^\star\|_{U_N}<\infty$.

\textit{Step 4: Convergence as $N\to\infty$.}
Let $h_\Omega=h_\Omega(N)\downarrow 0$ and combine \eqref{eq:empirical_two_term} with Assumption~\ref{ass:density} and the uniform boundedness of $\|u_N^\star\|_{U_N}$ to obtain
\[
\|u_{N,h_\Omega(N)}^\dagger-u^\star\|_{H^\ell(\Omega)}
\le
C\, h_\Omega(N)^\gamma\,\|u_N^\star\|_{U_N}
+
\inf_{v\in U_N}\|v-u^\star\|_{H^\ell(\Omega)}\xrightarrow[N\to\infty]{}0,
\]
provided $h_\Omega(N)^\gamma\|u_N^\star\|_{U_N}\to 0$. This proves the stated convergence. 
Finally, if $u^\star\in U_N$ for all sufficiently large $N$, then the bias term in \eqref{eq:empirical_two_term} vanishes, yielding \eqref{eq:empirical_single_term}.
\qed
\end{proof}

\end{document}